\documentclass[12pt,a4paper]{amsart}
\usepackage{amsfonts}
\usepackage{amsthm}
\usepackage{amsmath}
\usepackage{amscd}
\usepackage[utf8]{inputenc}
\usepackage{bbold}
\usepackage{t1enc}
\usepackage[mathscr]{eucal}
\usepackage{indentfirst}
\usepackage{graphicx}
\usepackage{graphics}
\usepackage{pict2e}

\numberwithin{equation}{section}

\theoremstyle{plain}
\newtheorem{Th}{Theorem}[section]
\newtheorem{Lemma}[Th]{Lemma}

\newtheorem{Pro}[Th]{Proposition}

 \theoremstyle{definition}
\newtheorem{Def}[Th]{Definition}

\newtheorem{Rem}[Th]{Remark}
\newtheorem{?}[Th]{Problem}

\DeclareMathOperator{\sgn}{sgn}

\newcommand{\C}{\mathbb{C}}

\newcommand{\Gr}{\mathbb{Gr}}

\begin{document}
\title{Matrix tuples with linearly dependent invariant subspaces}
\author{Tam\'as Bencze}
\address{E\"{o}tv\"{o}s Lor\'{a}nd University,
  P\'{a}zm\'{a}ny P\'{e}ter s\'{e}t\'{a}ny 1/C, Budapest, 1117 Hungary}
\email{ benczetamas11@gmail.com}
\begin{abstract}
The set of matrix tuples with invariant subspaces whose dimensions sum up to the dimension of the space, but which do not span the whole space form an algebraic hypersurface. We found the equation of this hypersurface. This generalizes previous joint work.
\end{abstract}
\maketitle
\section{Introduction}
In our previous joint work with Péter E.\ Frenkel \cite{tri},  we found the equation of the set of matrix pairs that have nontrivially intersecting invariant subspaces of complementary dimensions. In this article I generalize this result to several matrices with invariant subspaces whose dimensions sum up to the dimension of the space, but which do not span the whole space. For  easier understanding, I will show the proof for the special case of 1 dimensional invariant subspaces before proving the general case.

\subsection*{Notations}
We will call $\mathbb{C}^n$ the set of column vectors, and $e_1$, $\dots$, $e_n$ its standard basis. Unless otherwise stated, vectors are column vectors. We will call $(\C^n)^*$ the set of row vectors, and $e_1^*$, $\dots$, $e_n^*$ its standard basis. We will call the ring of $n\times n$ complex matrices $M_n(\C)$. We write $\Gr_k(n)$ for the Grassmannian variety whose points parametrize the $k$-dimensional linear subspaces of $\C^n$. We will denote the polynomial ring with variables $x_1$, $\dots$, $x_n$ by $\C[x_1, \dots, x_n]$, and its subset of degree $d$ homogeneous polynomials by $\C[x_1, \dots, x_n]_d$. We will denote the set $\{1, \dots, n\}$ by $[n]$.
\section{The case of $n$ matrices in $M_n(\mathbb{C})$}
\begin{Def}
For $A_i\in M_n(\mathbb{C})$ $(i\in [n])$, we define $A$ on the tensor product space $(\mathbb{C}^n)^{\otimes n}$ by the following equation:\\
$A(v_1\otimes\dots\otimes v_n)=\sum\limits_{i=1}^n v_1\otimes \dots\otimes A_i(v_i)\otimes\dots\otimes v_n$
\end{Def}
\begin{Rem}
If $A_i$ has eigenvalues $\lambda_{i,j}$ ($i,j\in [n]$), then $A$ has eigenvalues $\sum\limits_{i=1}^n\lambda_{i,f(i)}$ ($f:[n]\rightarrow[n]$)
\end{Rem}
\begin{Def}\label{w}
$$M(A_1,\dots,A_n):=\begin{pmatrix} w\\ wA\\ \vdots \\wA^{n^n-1}\end{pmatrix}$$ where $$w:=\sum\limits_{\pi\in S_n}\sgn(\pi)\bigotimes\limits_i e_{\pi(i)}^*$$ 

$$P(A_1,\dots,A_n):= \det M(A_1,\dots,A_n)$$
\end{Def}
\begin{Lemma}\label{pol}
For $v\in (\mathbb{C}^n)^*, B\in M_n(\mathbb{C})$, $$\det \begin{pmatrix} v\\ vB\\ \vdots \\vB^{n-1}\end{pmatrix}=0$$ if and only if $B$ has an eigenvector orthogonal to $v$.
\end{Lemma}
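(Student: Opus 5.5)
The idea is to read the determinant as a statement about the Krylov space of $v$ under right multiplication by $B$. Let $K:=\operatorname{span}\{v, vB, \dots, vB^{n-1}\}\subseteq(\C^n)^*$. The determinant vanishes exactly when the rows are linearly dependent, i.e.\ when $\dim K<n$. Note that $K$ is invariant under right multiplication by $B$. If $vB^k$ were independent of $v,\dots,vB^{k-1}$ for every $k\le n-1$, we would get $\dim K=n$. So when $\dim K<n$ there is a first $k<n$ with $vB^k\in\operatorname{span}\{v,\dots,vB^{k-1}\}$. Inductively $vB^m$ then lies in that span for every $m\ge k$, and hence $K$ is a proper $B$-invariant subspace of row vectors. (If $v=0$ both sides are trivially true: the determinant is $0$, and any eigenvector is orthogonal to $v$.)

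For the direction ``determinant zero $\Rightarrow$ eigenvector orthogonal to $v$'', pass to the annihilator $K^\perp:=\{x\in\C^n : ux=0 \text{ for all } u\in K\}$. It is nonzero since $\dim K<n$. It is $B$-invariant: if $x\in K^\perp$ and $u\in K$, then $u(Bx)=(uB)x=0$ because $uB\in K$. Over $\C$ the restriction of $B$ to the nonzero invariant subspace $K^\perp$ has an eigenvector $x$. Since $v\in K$, we get $vx=0$, so $x$ is an eigenvector of $B$ orthogonal to $v$.

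For the converse, suppose $Bx=\lambda x$ with $x\neq 0$ and $vx=0$. Then $vB^jx=\lambda^j vx=0$ for all $j$, so every row of the matrix annihilates the nonzero vector $x$. Thus $Mx=0$ for the displayed matrix $M$, and therefore $\det M=0$.

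The only mildly delicate step is showing that $K$ is $B$-invariant once the first $n$ iterates are dependent. This is the standard Krylov argument: once $vB^k$ falls into the span of the earlier iterates, multiplying that relation by $B$ keeps every later iterate in the same span. The rest is duality: a proper invariant subspace of row vectors has a nonzero invariant annihilator, and that annihilator contains an eigenvector.
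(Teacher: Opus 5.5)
Your proof is correct and follows the paper's argument: the kernel of the matrix is exactly your $K^\perp$, and both proofs extract from it a nonzero $B$-invariant subspace orthogonal to $v$, which then contains an eigenvector (the converse direction is identical). The only difference is that the paper uses the cyclic subspace generated by a single kernel vector $u$, tacitly invoking Cayley--Hamilton to pass from $vB^ku=0$ for $k<n$ to all $k$, whereas you show the whole kernel is $B$-invariant via the Krylov stabilization argument, which makes that step explicit.
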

\begin{proof}
If the determinant is 0, the matrix has a nonzero vector $u$ in its kernel. This means that $vB^ku=0$ for all $k<n$, hence the whole $B$-invariant subspace generated by $u$ is orthogonal to $v$. It contains an eigenvector.

For the other direction, if $B$ has an eigenvector $u$ that is orthogonal to $v$, then $vB^ku=0$ for every $k$, so $u$ is in the kernel of the matrix, proving that the determinant is $0$.
\end{proof}
\begin{Pro}
If there exist an eigenvector $v_i$ of $A_i$ for each $i$, such that $v_1, \dots, v_n$ are linearly dependent, then $P(A_1,\dots,A_n)=0$
\end{Pro}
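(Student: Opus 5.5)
The plan is to apply Lemma~\ref{pol} on the big space $(\C^n)^{\otimes n}$, with $B=A$ and $v=w$. The matrix $M(A_1,\dots,A_n)$ is exactly the matrix of that lemma, with $n^n=\dim(\C^n)^{\otimes n}$ rows. So it suffices to produce an eigenvector $u$ of $A$ with $wu=0$. In fact only the easy direction is needed: if $Au=\mu u$ and $wu=0$, then $wA^ku=\mu^k wu=0$ for all $k$. Hence $u\neq 0$ lies in the kernel of $M$, and $P=\det M=0$.

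\textbf{Construction of the eigenvector.} Let $A_iv_i=\lambda_i v_i$ with $v_i\neq 0$, and put $u:=v_1\otimes\dots\otimes v_n$. Then $u\neq 0$, since a tensor product of nonzero vectors is nonzero. By the definition of $A$,
$$Au=\sum_{i=1}^n v_1\otimes\dots\otimes \lambda_i v_i\otimes\dots\otimes v_n=\Big(\sum_{i=1}^n\lambda_i\Big)u,$$
so $u$ is an eigenvector of $A$.

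\textbf{Orthogonality.} Here $w$ acts on pure tensors factorwise, so
$$wu=\sum_{\pi\in S_n}\sgn(\pi)\prod_{i=1}^n e_{\pi(i)}^*(v_i)=\det(v_1,\dots,v_n),$$
the determinant of the matrix with columns $v_1,\dots,v_n$. By the Leibniz formula, this determinant is $\det$ of the transpose, which is the same value. Since the $v_i$ are linearly dependent, $wu=0$.

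\textbf{Conclusion and main point to check.} Combining the two steps with Lemma~\ref{pol} (or directly with its easy direction) gives $P(A_1,\dots,A_n)=0$. The only point needing care is the identification $wu=\det(v_1,\dots,v_n)$, i.e.\ matching the convention that $w$ pairs its $i$-th tensor factor with the $i$-th factor of $u$. Nothing else is an obstacle.
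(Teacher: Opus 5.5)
Your proposal is correct and follows the paper's proof exactly: take $u=\bigotimes v_i$, check that it is an eigenvector of $A$ with $wu=\det(v_1,\dots,v_n)=0$, and apply the easy direction of Lemma~\ref{pol} in dimension $n^n$. Your extra details, that $u\neq 0$ and the Leibniz-formula identification of $wu$, fill in what the paper calls ``easy to check.''
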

\begin{proof}
$v:=\bigotimes v_i$
\\
It's easy to check that $$wv=\det(v_1,v_2,\dots ,v_n)=0$$ and $v$ is an eigenvector of $A$, so we can use \ref{pol}.
\end{proof}
The converse isn't always true, but if $A$ has no multiple eigenvalue, it is:
\begin{Pro}
If $A$ has no multiple eigenvalue and $P(A_1,\dots,A_n)=0$, then there exist an eigenvector $v_i$ of $A_i$ for each $i$, such that $v_1, \dots, v_n$ are linearly dependent.
\end{Pro}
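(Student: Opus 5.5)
By Lemma \ref{pol}, applied with $B=A$ acting on $(\C^n)^{\otimes n}$ (so the space has dimension $n^n$) and $v=w$, the vanishing of $P(A_1,\dots,A_n)$ gives an eigenvector $u$ of $A$ with $wu=0$. The plan is to show that, when $A$ has no multiple eigenvalue, every eigenvector of $A$ is (up to scalar) a pure tensor $v_1\otimes\dots\otimes v_n$ with $v_i$ an eigenvector of $A_i$. Granting this, $0=wu=\det(v_1,\dots,v_n)$ by the same computation as in the previous proposition, so $v_1,\dots,v_n$ are linearly dependent, which is the claim.

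To prove the structural claim, I will first show that each $A_i$ is diagonalizable with distinct eigenvalues. Suppose some $A_i$ had a repeated eigenvalue $\lambda_{i,j}=\lambda_{i,j'}$ with $j\ne j'$ (as a root of the characteristic polynomial). Then, by the Remark, the eigenvalue sums for $f$ and $f'$ that agree except $f(i)=j$, $f'(i)=j'$ coincide. This produces a multiple eigenvalue of $A$, a contradiction.

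The Remark should be understood with multiplicity: the characteristic polynomial of $A$ has roots $\sum_i\lambda_{i,f(i)}$ over all $n^n$ maps $f$. This follows by simultaneously triangularizing, i.e.\ taking Schur bases for each $A_i$ and ordering the tensor basis lexicographically, so that $A$ becomes triangular with exactly these diagonal entries. I will state this explicitly, since the Remark in the paper does not mention multiplicities.

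Consequently each $A_i$ has an eigenbasis $u_{i,1},\dots,u_{i,n}$ with distinct eigenvalues. The $n^n$ tensors $u_{1,f(1)}\otimes\dots\otimes u_{n,f(n)}$ form a basis of $(\C^n)^{\otimes n}$, and by the definition of $A$ each is an eigenvector of $A$ with eigenvalue $\sum_i\lambda_{i,f(i)}$. These $n^n$ eigenvalues are pairwise distinct by hypothesis, so every eigenspace of $A$ is one-dimensional and spanned by one of these pure tensors. Hence the eigenvector $u$ from Lemma \ref{pol} is a nonzero multiple of some $u_{1,f(1)}\otimes\dots\otimes u_{n,f(n)}$, and we take $v_i=u_{i,f(i)}$.

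The main obstacle is the multiplicity-sensitive version of the Remark, which is needed to rule out nondiagonalizable $A_i$ and repeated eigenvalues of the $A_i$. The triangularization argument above handles it, and everything else is direct.
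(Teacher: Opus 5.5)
Your proposal is correct and follows the paper's proof. Lemma~\ref{pol} gives an eigenvector of $A$ orthogonal to $w$; distinct eigenvalues of $A$ force each $A_i$ to have distinct eigenvalues, so the pure tensors of eigenvectors form an eigenbasis with simple eigenvalues and that eigenvector is some $\bigotimes v_i$; then $w(\bigotimes v_i)=\det(v_1,\dots,v_n)=0$. Your triangularization argument for the multiplicity-sensitive Remark just makes explicit the step ``none of the $A_i$ has a multiple eigenvalue either,'' which the paper asserts without justification.
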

For the proof, we first need a lemma:
\begin{Lemma}
If $A$ has no multiple eigenvalue, then all of its eigenvectors are of the form $\bigotimes v_i$ for some eigenvector $v_i$ of $A_i$ (for all $i\in [n]$)
\end{Lemma}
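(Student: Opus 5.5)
The plan is to construct $n^n$ linearly independent eigenvectors of $A$ of product form, and then use the simplicity of the spectrum to conclude that every eigenvector is a multiple of one of them. This rests on two preliminary reductions.

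\emph{Step 1: each $A_i$ has $n$ distinct eigenvalues.} By the Remark, the eigenvalues of $A$, counted with multiplicity, are the sums $\sum_i \lambda_{i,f(i)}$ with $f:[n]\to[n]$. Suppose $A_{i_0}$ had a repeated eigenvalue, say $\lambda_{i_0,1}=\lambda_{i_0,2}$ as entries of its eigenvalue list. Take two maps $f,f'$ that agree everywhere except at $i_0$, where one takes the value $1$ and the other $2$. They produce the same sum, which is then an eigenvalue of $A$ of multiplicity at least $2$. This contradicts the hypothesis, so each $A_i$ has distinct eigenvalues and hence is diagonalizable. Choose an eigenbasis $u_{i,1},\dots,u_{i,n}$ of $A_i$ with $A_iu_{i,j}=\lambda_{i,j}u_{i,j}$.

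\emph{Step 2: the product vectors form an eigenbasis of $A$.} For each $f:[n]\to[n]$ put $u_f:=\bigotimes_i u_{i,f(i)}$. From the definition of $A$,
$$Au_f=\Big(\sum_i\lambda_{i,f(i)}\Big)u_f .$$
The $u_f$ form a basis of $(\C^n)^{\otimes n}$, being tensor products of bases. Since $A$ has no multiple eigenvalue, the $n^n$ eigenvalues $\mu_f:=\sum_i\lambda_{i,f(i)}$ are pairwise distinct. So each eigenspace of $A$ is one-dimensional and is spanned by the corresponding $u_f$. To see this directly, write an eigenvector $x$ with eigenvalue $\mu$ as $x=\sum_f c_fu_f$. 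Applying $A-\mu$ gives $\sum_f c_f(\mu_f-\mu)u_f=0$. Hence $c_f=0$ whenever $\mu_f\neq\mu$, and at most one $f$ has $\mu_f=\mu$. Therefore $x=c_fu_f$.

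\emph{Step 3: conclusion.} The vector $c_fu_f$ equals $(c_fu_{1,f(1)})\otimes u_{2,f(2)}\otimes\dots\otimes u_{n,f(n)}$, which is a product of eigenvectors of the $A_i$, as required.

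The only delicate point is Step 1: the hypothesis concerns the spectrum of $A$, while the argument needs diagonalizability of each $A_i$. The Remark handles this, provided its eigenvalue list is read with multiplicities, which is true, for instance via simultaneous triangularization of the commuting summands $I\otimes\cdots\otimes A_i\otimes\cdots\otimes I$.
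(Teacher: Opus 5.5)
Your proof is correct and follows the paper's argument: eigenbases of the $A_i$ give the product eigenbasis $\bigotimes_i u_{i,f(i)}$ of $A$, and the simple spectrum forces every eigenvector to be a scalar multiple of one of these. Your Step 1 also supplies something the paper only asserts, namely why each $A_i$ must have distinct eigenvalues, by reading the Remark with algebraic multiplicities.
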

\begin{proof}
If $A$ has no multiple eigenvalue, then none of the $A_i$ has either, so $A_i$ has an eigenbasis $\{v_{i,j}|j\in[n]\}$. The tensor product $\bigotimes\limits v_{i,f(i)}$ is an eigenvector of $A$ for all $f:[n]\rightarrow [n]$ (and they are linearly independent), so they form an eigenbasis of $A$. As $A$ has no multiple eigenvalue, each eigenvector must be a scalar multiple of one of these vectors.
\end{proof}
\begin{proof}
By \ref{pol}, $A$ has an eigenvector orthogonal to $w$, and by the previous lemma, that eigenvector is of the form $\bigotimes v_i$. Then $$0=w(\bigotimes v_i)=\det(v_1,v_2,...v_n)$$ so the $v_i$ are linearly dependent.
\end{proof}
Let's look at the polynomial $$\prod\limits_{f\neq g:[n]\rightarrow[n]}\sum\limits_{i\in [n]}(\lambda_{i,f(i)}-\lambda_{i,g(i)})$$

It decomposes into $$\prod\limits_{\emptyset\neq K\subseteq[n]}\left(\prod\limits_{\substack{f,g:K\rightarrow [n]\\ \forall i f(i)\neq g(i)}}\sum\limits_{i\in K}(\lambda_{i,f(i)}-\lambda_{i,g(i)})\right)^{n^{n-|K|}}$$

The polynomial in the brackets is invariant under permutations of the $\lambda_{i,j}$ with given $i$, so there exists a unique polynomial in the entries of the $A_i$ that is equal to this polynomial of the eigenvalues, and it is invariant under conjugation (of any $A_i$). Let us call this polynomial $d_K$. 

\begin{Lemma}
$d_K$ is irreducible if $|K|=1$, and the square of an irreducible if $|K|>1$
\end{Lemma}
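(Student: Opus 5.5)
The plan is to work at the level of eigenvalues. I will use that conjugation-invariant polynomials in the entries of the $A_i$ ($i\in K$) correspond exactly to polynomials in $\C[\lambda_{i,j}]$ invariant under $G:=\prod_{i\in K}S_n$, where the $i$-th factor permutes $\lambda_{i,1},\dots,\lambda_{i,n}$. The argument then has three steps: reduce factorization of $d_K$ to factorization inside $\C[\lambda]^G$, identify the irreducible $G$-invariant factors via the action of $G$ on the linear forms, and compute a sign character.

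\textbf{Step 1: reduction to $\C[\lambda]^G$.} Let $h$ be an irreducible factor of $d_K$ in $\C[\text{entries}]$, which is a UFD. For $g\in GL_n$, conjugating one $A_i$ by $g$ sends $h$ to another irreducible factor of $d_K$. Since $\prod_i GL_n$ is connected and $d_K$ has only finitely many irreducible factors up to scalars, $h$ is sent to $\chi(g)h$ for some character $\chi$, so $\chi$ is a power of $\det$. Scalar matrices act trivially by conjugation, so this power is $0$. Hence every irreducible factor of $d_K$ is itself conjugation invariant. It is therefore a $G$-invariant polynomial of the eigenvalues, and it is irreducible among $G$-invariant polynomials. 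Conversely, any $G$-invariant polynomial of the eigenvalues comes from a polynomial in the entries. So it suffices to factor $d_K$ inside $\C[\lambda]^G$.

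\textbf{Step 2: the irreducible invariant factors.} For $f,g:K\to[n]$ with $f(i)\neq g(i)$ for all $i$, write $L_{f,g}=\sum_{i\in K}(\lambda_{i,f(i)}-\lambda_{i,g(i)})$. These are the linear (hence irreducible) factors of $d_K$ in $\C[\lambda]$, and $L_{g,f}=-L_{f,g}$. Since each $S_n$ acts $2$-transitively on $[n]$, $G$ acts transitively on the ordered pairs $(f,g)$. Hence the lines $\C L_{f,g}$ form a single $G$-orbit, indexed by unordered pairs $\{f,g\}$. Choose one representative $L_p$ for each unordered pair $p$ and set $N=\prod_p L_p$. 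Then $d_K=\pm N^2$, and $\sigma N=\chi(\sigma)N$ for a character $\chi:G\to\{\pm1\}$.

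A $G$-invariant divisor of $d_K$ in $\C[\lambda]$ is, up to a scalar, a product of the $L_p$ with multiplicities that are constant on $G$-orbits. As there is only one orbit, the only candidates are $N$ (if $N$ is invariant) and $N^2$. Consequently:
\begin{itemize}
\item if $\chi$ is trivial, $d_K$ is the square of the irreducible invariant $N$;
\item if $\chi$ is nontrivial, $d_K$ itself is irreducible.
\end{itemize}

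\textbf{Step 3: computing $\chi$.} This is the step I expect to be the crux. It suffices to evaluate $\chi$ on a transposition $\tau=(j\,l)$ acting in one coordinate $i\in K$.

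\emph{Case $|K|=1$.} Here $d_K=\prod_{j\neq l}(\lambda_{i,j}-\lambda_{i,l})$ is, up to sign, the discriminant of $A_i$. The transposition $\tau$ negates $\lambda_{i,j}-\lambda_{i,l}$ and permutes the remaining lines in pairs, so $N$ is the Vandermonde product and $\chi=\sgn\neq1$. Hence $d_K$ is irreducible, recovering the classical irreducibility of the discriminant.

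\emph{Case $|K|\ge2$.} I will show that $\tau$ fixes no unordered pair $\{f,g\}$ while reversing it. Such a reversal would require $\tau f=g$, which forces $f(i')=g(i')$ for every $i'\neq i$ in $K$, contradicting the defining condition. The remaining lines are permuted by $\tau$ in $2$-cycles $\{p,\tau p\}$. If $\tau L_p=\epsilon L_{\tau p}$, then applying $\tau$ again gives $\tau L_{\tau p}=\epsilon L_p$ with the same $\epsilon$. Hence each $2$-cycle contributes $\epsilon^2=1$, and lines fixed with their orientation contribute $1$. Therefore $\chi(\tau)=1$, so $N$ is $G$-invariant. By Step 2, $d_K=\pm N^2$ with $N$ irreducible in the invariant ring, and by Step 1 $N$ is irreducible as a polynomial in the matrix entries.
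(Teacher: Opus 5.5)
Your proposal is correct and follows essentially the same route as the paper. The steps match: every factor is conjugation-invariant (the paper uses that $GL_n(\mathbb{C})^{|K|}$ has no nontrivial finite quotients, while your connectedness-plus-character argument also explicitly handles the scalar ambiguity); transitivity on the linear factors leaves the square root as the only candidate; and a sign check on transpositions, where the paper counts $(n(n-1))^{|K|-1}$ sign changes for its lexicographic square root and your observation that no transposition reverses a line when $|K|\ge 2$ reaches the same conclusion without fixing representatives.
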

\begin{proof}
Suppose that $d_K$ has a divisor. Since $d_K$ is conjugation invariant, if we conjugate an input matrix with an invertible matrix before applying this divisor, we still get a divisor of $d_K$. This action gives a group homomorphism from $GL_n(\mathbb{C})^{|K|}$ to the permutation group on these divisors, and this map has a transitive image. As $d_K$ has only finitely many divisors, this is a finite group, and since $GL_n(\mathbb{C})^{|K|}$ has no nontrivial finite factor groups, our divisor is fixed by all conjugations, so it can be expressed as a polynomial of the eigenvalues $\lambda_{i,j}$, which is invariant under permutations for fixed $i$. 

In the polynomial $$\prod\limits_{\substack{f,g:K\rightarrow [n]\\ \forall i f(i)\neq g(i)}}\sum\limits_{i\in K}(\lambda_{i,f(i)}-\lambda_{i,g(i)})$$ each factor appears twice (as switching $f$ and $g$ only changes the sign), and on the different factors, the permutations act transitively, so $d_K$ can only have its square root as a nontrivial divisor. Take the square root: the product of those factors where $f$ is lexicografically smaller than $g$. As any permutation can only change the sign (its square is invariant), it is enough to check if switching $\lambda_{i,1}$ and $\lambda_{i,2}$ changes the sign for any $i$. Switching them changes the sign of $(n(n-1))^{|K|-1}$ factors if $i$ is the smallest element of $K$ (the factors with $f(i)=1$ and $g(i)=2$), and $0$ factors otherwise. $(n(n-1))^{|K|-1}$ is even if $|K|>1$, so switching $\lambda_{i,1}$ and $\lambda_{i,2}$ does not change the sign of the polynomial, but it is odd if $|K|=1$, so in this case we do not get a permutation invariant polynomial.
\end{proof}
 Let's call this irreducible polynomial $D_K$.

\begin{Rem}
For different $K$, $D_K$ is a different irreducible, since $D_K$ depends on the matrix $A_i$ if and only if $i\in K$.
\end{Rem}

\begin{Pro}
$P$ is divisible by $D_K^{n^{n-|K|}}$, if $|K|>1$.
\end{Pro}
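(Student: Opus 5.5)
The plan is to write $P$, near a generic point of the hypersurface $\{D_K=0\}$, as an analytic factor times a Vandermonde determinant in the eigenvalues of $A$. We then read off the order of vanishing of $P$ along $\{D_K=0\}$. Since $D_K$ is irreducible and $\C[\text{entries}]$ is a UFD, it suffices to show that $P$ vanishes to order at least $n^{n-|K|}$ along $\{D_K=0\}$ at a generic point of that hypersurface.

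\textbf{Step 1 (factorization of $P$).} Write $N=n^n$ and let $U$ be an $N\times N$ matrix whose columns are eigenvectors $u_1,\dots,u_N$ of $A$ with eigenvalues $\mu_1,\dots,\mu_N$. Then $M(A_1,\dots,A_n)\,U$ has $(k,j)$ entry $\mu_j^{k}\,(w u_j)$. Hence
$$P\cdot \det U=\prod_{j}(w u_j)\cdot \prod_{j<l}(\mu_l-\mu_j).$$
Suppose each $A_i$ has distinct eigenvalues $\lambda_{i,1},\dots,\lambda_{i,n}$ with eigenbasis $U_i=(v_{i,1},\dots,v_{i,n})$. Then we may take $u_f=\bigotimes_i v_{i,f(i)}$ for $f:[n]\to[n]$, so that $\det U=\pm\prod_i(\det U_i)^{n^{n-1}}\neq 0$. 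We also have $w u_f=\det(v_{1,f(1)},\dots,v_{n,f(n)})$ and $\mu_f=\sum_i\lambda_{i,f(i)}$. This holds even when $A$ itself has repeated eigenvalues.

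\textbf{Step 2 (local analytic setup).} Pick a point $p=(A_1,\dots,A_n)$ on $\{D_K=0\}$ such that each $A_i$ has distinct eigenvalues and exactly one unordered pair $\{\varphi,\gamma\}$ of maps $K\to[n]$ with $\varphi(i)\ne\gamma(i)$ for all $i\in K$ satisfies $\ell:=\sum_{i\in K}(\lambda_{i,\varphi(i)}-\lambda_{i,\gamma(i)})=0$. Such points are dense in $\{D_K=0\}$: the conditions fail only on a proper closed subset, because the distinct hyperplanes in $\lambda$-space meet in lower dimension.

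Near $p$, the eigenvalues $\lambda_{i,j}$ and normalized eigenvectors $v_{i,j}$ can be chosen analytically. The map from matrix tuples to their eigenvalues is a submersion there. The factors of the square root $D_K$ that are nonzero at $p$ form a unit, so locally $D_K=(\text{unit})\cdot\ell$, and $\ell$ is a reduced local equation of the hypersurface. By Step 1, locally
$$P=\frac{\prod_f(w u_f)}{\pm\prod_i(\det U_i)^{n^{n-1}}}\;\prod_{f<g}(\mu_g-\mu_f),$$
where the first factor is analytic.

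\textbf{Step 3 (counting).} The factor $\mu_g-\mu_f$ equals $\sum_{i}(\lambda_{i,g(i)}-\lambda_{i,f(i)})$, and only coordinates where $f,g$ differ contribute. At $p$ it vanishes, up to sign, as $\pm\ell$ exactly when $f,g$ agree outside $K$ and $\{f|_K,g|_K\}=\{\varphi,\gamma\}$. By genericity of $p$, no other factor vanishes at $p$; in particular no factor from a different $K'$ or a different pair does. The number of such unordered pairs $\{f,g\}$ is the number of common extensions outside $K$, namely $n^{n-|K|}$. Thus the Vandermonde part is $(\text{unit})\cdot\ell^{\,n^{n-|K|}}$, so $P$ is locally divisible by $\ell^{n^{n-|K|}}$, and hence by $D_K^{n^{n-|K|}}$ in the local ring.

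Finally, write $P=D_K^{m}Q$ in the polynomial ring with $D_K\nmid Q$. Then $Q$ does not vanish identically on the irreducible hypersurface $\{D_K=0\}$, so it is nonzero at some generic $p$ as above. Comparing with the local computation gives $m\ge n^{n-|K|}$.

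\textbf{Main obstacle.} The delicate part is Step 2: justifying that $D_K$ is, up to a unit, exactly the single linear form $\ell$ near a generic point, rather than a power of it or a product of several forms. This uses that $D_K$ is the square root of $d_K$, which contains each pair $\{f,g\}$ only once, together with the genericity choice of $p$. A secondary point is that $\det U_i\ne0$ near $p$. This requires that the individual $A_i$ have simple spectrum, which holds on a dense open subset of $\{D_K=0\}$ exactly because $|K|>1$: for $|K|=1$ the hypersurface is where some $A_i$ itself has a repeated eigenvalue, which is why the statement excludes that case.
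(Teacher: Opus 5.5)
Your argument is correct, but it turns the pair count into divisibility by a different mechanism than the paper. The paper never computes $P$ near the hypersurface. It quotes \cite[Lemma 3.2.]{tri}, which reduces the claim to showing $\dim\ker M\ge n^{n-|K|}$ on $\{D_K=0\}$. It then passes to the dense locus where every $A_i$ has simple spectrum, so $A$ is diagonalizable in the tensor eigenbasis $u_f$, and exhibits kernel vectors directly. For each extension $h$ of the vanishing pair $(\varphi,\gamma)$ outside $K$, the eigenvectors $u_{(\varphi,h)}$ and $u_{(\gamma,h)}$ share an eigenvalue. Their span is therefore an eigenspace meeting $\ker w$. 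Since these $n^{n-|K|}$ pairs are disjoint subsets of a basis, the resulting kernel vectors are independent.

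You instead prove the identity $P\det U=\prod_f(wu_f)\prod_{f<g}(\mu_g-\mu_f)$ and count the order of vanishing of the Vandermonde factor along $\{D_K=0\}$ in the local analytic ring. The count of coincident eigenvalue pairs is the same in both arguments. Given the cited lemma, the paper's route is shorter and purely algebraic. Yours avoids the external lemma and gives more information. On the simple-spectrum locus it is a closed formula for $P$: the Vandermonde part carries exactly the $D_K$-factors, and the leftover $\prod_f\det(v_{1,f(1)},\dots,v_{n,f(n)})$ vanishes exactly on $X$. That could also be pushed toward the main theorem.

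The step you flagged as the main obstacle is less delicate than you fear. For the lower bound you only need $D_K(p)=0$ and $\det U_i(p)\neq 0$ for all $i$. Even if several factors of $D_K$ vanished at $p$, each would still occupy $n^{n-|K|}$ distinct factors of the Vandermonde. Also, the final comparison $D_K^{n^{n-|K|}}\mid D_K^{m}Q$ with $Q(p)\neq 0$ does not require $D_K$ to be locally prime.
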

\begin{proof}
By \cite[Lemma 3.2.]{tri}, it is enough to prove that \\$D_K=0\Rightarrow \dim \ker M\geq n^{n-|K|}$\\
And because $\dim \ker M\geq n^{n-|K|}$ is a Zariski-closed set, we may assume that $D_{K'}\neq 0$ for all $|K'|=1$, so $A$ is diagonalizable.

Since $D_K=0$, $A$ has $n^{n-|K|}$ distinct pairs of equal eigenvalues. The 2 dimensional subspaces spanned by the corresponding eigenvectors each contain an eigenvector orthogonal to $w$, hence in the kernel of $M$.
\end{proof}
\begin{Lemma}
The set $$Y=\{(v_1,v_2,\dots,v_n)\in \mathbb{P}(\mathbb{C}^n)^n| v_1,\dots ,v_n \textrm{ linearly dependent}\}$$ is an irreducible projective variety.
\end{Lemma}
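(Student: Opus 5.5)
The plan is to exhibit $Y$ as the image of an irreducible variety under a morphism. Projectivity is immediate: $Y$ is the zero locus in $\mathbb{P}(\mathbb{C}^n)^n$ of the single polynomial $\det(v_1,\dots,v_n)$. This polynomial is multihomogeneous of degree $1$ in each $v_i$, so it defines a closed subvariety of the product of projective spaces, which is projective via the Segre embedding. The real content is irreducibility.

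For irreducibility I would use an incidence correspondence. Let
$$Z=\{(H,v_1,\dots,v_n)\in \mathbb{P}((\mathbb{C}^n)^*)\times\mathbb{P}(\mathbb{C}^n)^n \mid Hv_i=0 \ \forall i\},$$
where a point of $\mathbb{P}((\mathbb{C}^n)^*)$ is viewed as a hyperplane $\ker H$. The projection of $Z$ to $\mathbb{P}((\mathbb{C}^n)^*)$ has fibre $\mathbb{P}(\ker H)^n\cong(\mathbb{P}^{n-2})^n$ over each $H$. Thus $Z$ is a fibre bundle over $\mathbb{P}^{n-1}$ with irreducible fibres of constant dimension, and it is locally trivial: on the open set where a fixed coordinate of $H$ is nonzero one can write down an explicit trivialisation. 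Hence $Z$ is irreducible. Alternatively, $Z$ is a homogeneous space of $GL_n(\mathbb{C})$ acting diagonally, up to the orbit structure of tuples, so one can argue with the standard fibration criterion instead.

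The second projection $Z\to\mathbb{P}(\mathbb{C}^n)^n$ is a morphism between projective varieties, so it is closed. Its image is exactly $Y$, because $v_1,\dots,v_n$ are linearly dependent if and only if they lie in a common hyperplane. The image of an irreducible space under a continuous map is irreducible, so $Y$ is irreducible.

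An alternative avoiding fibre bundles works as follows. Consider the surjective morphism $\mathbb{P}(\mathbb{C}^n)^{n-1}\times \mathbb{P}^{n-2}\dashrightarrow Y$ defined by $(v_1,\dots,v_{n-1},[c])\mapsto(v_1,\dots,v_{n-1},\sum c_iv_i)$. This map is only rational, so a closure argument would be needed, which is why I prefer the incidence variety. The step needing the most care is the irreducibility of $Z$, that is, verifying that the fibration theorem applies. I expect this to be routine, either through the local trivialisation or by noting that $Z$ is the image of the irreducible variety $GL_n(\mathbb{C})\times (\mathbb{P}^{n-2})^n$ under $(g,u_1,\dots,u_n)\mapsto(g\cdot H_0, gu_1,\dots,gu_n)$, where $H_0=e_n^*$ and $\mathbb{P}^{n-2}=\mathbb{P}(\ker e_n^*)$. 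This last description even gives $Y$ directly as the image of an irreducible variety, which is the argument I would actually write.
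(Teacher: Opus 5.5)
Your proof is correct, but it reaches irreducibility by a different route than the paper. The paper's proof is one sentence: $Y$ is the zero locus of $\det(v_1,\dots,v_n)$, and this multihomogeneous polynomial is irreducible. You use the determinant only for closedness. You get irreducibility geometrically, by exhibiting $Y$ as the image of the irreducible variety $GL_n(\mathbb{C})\times(\mathbb{P}^{n-2})^n$ under $(g,u_1,\dots,u_n)\mapsto(gu_1,\dots,gu_n)$. This map is onto $Y$ because dependent vectors lie in a common hyperplane, and every hyperplane is $g\ker e_n^*$ for some $g$.

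The paper's route is shorter, but it tacitly uses two facts. One is the classical irreducibility of the determinant as a polynomial in the $n^2$ entries. The other is that an irreducible multihomogeneous polynomial generates a prime ideal, and so cuts out an irreducible subset of $(\mathbb{P}^{n-1})^n$. In return it gives the extra information that $\det$ is a reduced defining equation of $Y$.

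Your route needs only that products and images of irreducible varieties are irreducible. It is the same style of argument the paper uses later: the projective-space bundle in Proposition \ref{n-irred} and the $GL_n(\mathbb{C})$-action in Lemma \ref{l-sub}. It also carries over directly to Lemma \ref{l-sub} by replacing $(\mathbb{P}^{n-2})^n$ with $\prod_i\Gr_{k_i}(\ker e_n^*)$. There, irreducibility can no longer be read off from a single polynomial in a polynomial ring.

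Two of your side remarks are inaccurate, though you do not rely on them. First, for $n\ge 3$, $Z$ is not a homogeneous space of $GL_n(\mathbb{C})$: tuples with all $v_i$ equal form a separate orbit. Second, the rational map $(v_1,\dots,v_{n-1},[c])\mapsto(v_1,\dots,v_{n-1},\sum c_iv_i)$ is not surjective onto $Y$. It misses tuples where $v_1,\dots,v_{n-1}$ are already dependent and $v_n$ lies outside their span.
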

\begin{proof}
This set is the vanishing set of the irreducible multihomogeneous polynomial $\det(v_1,v_2,\dots,v_n)$.
\end{proof}
\begin{Lemma}\label{eigenvector}
The set $$\{(A,v)\in \mathbb{P}M_n(\mathbb{C})\times\mathbb{P}(\mathbb{C}^n)|v\textrm{ is an eigenvector of }A\}$$ is a projective variety.
\end{Lemma}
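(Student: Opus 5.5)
The plan is to exhibit the set as the common zero locus of finitely many bihomogeneous polynomials in the entries of $A$ and the coordinates of $v$. Write $A=(a_{jk})$ and $v=(v_1,\dots,v_n)^T$. The condition ``$v$ is an eigenvector of $A$'' means that $Av$ is a scalar multiple of $v$. Since $v\neq 0$ in $\mathbb{P}(\C^n)$, this is equivalent to the $n\times 2$ matrix $(Av \mid v)$ having rank at most $1$. So the set is cut out by the vanishing of all $2\times 2$ minors
$$(Av)_j v_k-(Av)_k v_j=\sum_{l}a_{jl}v_lv_k-\sum_l a_{kl}v_lv_j \qquad (j<k).$$

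Each of these polynomials is bihomogeneous, of degree $1$ in the entries of $A$ and degree $2$ in $v$. Hence the vanishing of each is well defined on $\mathbb{P}M_n(\C)\times\mathbb{P}(\C^n)$, i.e.\ independent of the chosen representatives of $A$ and $v$. Bihomogeneous equations define closed subvarieties of a product of projective spaces. Equivalently, one can pass through the Segre embedding, where bihomogeneous polynomials of bidegree $(1,2)$ become homogeneous after multiplying by suitable monomials in the $A$-coordinates to balance the degrees.

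It remains to check that the equations capture exactly the condition. If $Av=\lambda v$, all minors vanish. Conversely, suppose all minors vanish. Pick $k$ with $v_k\neq 0$ and put $\lambda=(Av)_k/v_k$; the vanishing minors then give $(Av)_j=\lambda v_j$ for every $j$. The eigenvalue $\lambda=0$ is allowed, and that case is consistent.

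There is no real obstacle here. The only subtle point is that projective points $A\in\mathbb{P}M_n(\C)$ exclude $A=0$, but scaling $A$ does not change its eigenvectors, so the condition is well defined. If ``variety'' is meant to include irreducibility, that would require an extra argument: project to $\mathbb{P}(\C^n)$, whose fibers are linear subspaces of constant dimension. The statement as given, however, only asks for a closed projective set, so I would prove just that.
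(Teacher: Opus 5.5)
Your proposal is correct and matches the paper's proof: the paper also cuts out the set by the $2\times 2$ minors of the $n\times 2$ matrix $\begin{pmatrix} v & Av\end{pmatrix}$, which are bihomogeneous in the entries of $A$ and $v$. You additionally check explicitly that the vanishing of all minors is equivalent to $v$ being an eigenvector, which the paper leaves implicit.
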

\begin{proof}
The $2\times 2$ minors of the $n\times 2$ matrix $\begin{pmatrix}v & Av\end{pmatrix}$ are bihomogeneous polynomials in the entries of the matrix $A$ and the vector $v$. Their simultaneous vanishing defines the point set in question.
\end{proof}
\begin{Pro}\label{n-irred}
The set $X$ of $n$-tuples $(A_1,A_2,\dots ,A_n)\in (\mathbb{P}M_n(\mathbb{C}))^n$, such that there exists $v_1,\dots, v_n$ linearly dependent vectors, with $v_i$ eigenvector of $A_i$, form an irreducible projective variety.
\end{Pro}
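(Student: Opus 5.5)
We realize $X$ as the image of an irreducible incidence variety under a projection. Consider
$$Z=\{(A_1,\dots,A_n,v_1,\dots,v_n)\in (\mathbb{P}M_n(\C))^n\times \mathbb{P}(\C^n)^n \mid (v_1,\dots,v_n)\in Y,\ v_i \textrm{ is an eigenvector of } A_i \textrm{ for all } i\}.$$
By Lemma \ref{eigenvector}, applied to each pair $(A_i,v_i)$, and the previous lemma describing $Y$, the set $Z$ is cut out by multihomogeneous polynomials, so it is a closed subvariety of a product of projective spaces. The first projection $\pi_1:Z\to(\mathbb{P}M_n(\C))^n$ has image exactly $X$. Since a product of projective spaces is complete, $\pi_1$ is a closed map, so $X$ is a projective variety. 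For irreducibility it therefore suffices to show that $Z$ is irreducible, because the image of an irreducible space under a continuous map is irreducible.

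To show $Z$ is irreducible, use the second projection $\pi_2:Z\to Y$. Its fibre over $(v_1,\dots,v_n)$ is $\prod_i\mathbb{P}(E_{v_i})$, where $E_{v}=\{B\in M_n(\C)\mid Bv\in \C v\}$. This is a linear subspace of $M_n(\C)$ whose dimension is $n^2-n+1$, independent of $v$. Hence every fibre is a product of projective spaces of the same dimension, and in particular irreducible. Moreover $Y$ is irreducible by the previous lemma.

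The main obstacle is deducing irreducibility of $Z$ from irreducible equidimensional fibres over an irreducible base. That implication fails in general, so I would avoid it and prove local triviality instead. The cleanest route is to observe that $\{(B,v)\mid Bv\in\C v\}\subseteq \mathbb{P}M_n(\C)\times\mathbb{P}(\C^n)$ is the projectivization of a vector subbundle of the trivial bundle $M_n(\C)\times\mathbb{P}(\C^n)$. Its fibre over $v$ is $E_v$, which has constant rank; equivalently, the space is $\mathrm{Hom}$-type data consisting of $\mathcal{O}(-1)\to\mathcal{O}(-1)$ together with the rest. Alternatively, one can trivialize it over each affine chart $v_j\neq 0$ by conjugating with an explicit matrix $g_v$, depending regularly on $v$, that sends $e_j$ to $v$. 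Then $E_v=g_vE_{e_j}g_v^{-1}$, which gives an isomorphism of $\pi_2^{-1}(U)$ with $U\times\prod\mathbb{P}(E_{e_j})$.

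Hence over each open set $U\cap Y$ of a cover, $Z$ is isomorphic to $(U\cap Y)\times(\text{product of projective spaces})$, which is irreducible because $U\cap Y$ is a nonempty open subset of the irreducible $Y$. The open pieces $\pi_2^{-1}(U\cap Y)$ are irreducible and pairwise intersect in nonempty open sets, since $Y$ is irreducible and the fibres are nonempty. Therefore $Z$ is irreducible, and so is $X=\pi_1(Z)$.
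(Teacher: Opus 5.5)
Your proposal is correct and follows the paper's argument: the same incidence variety $Z$ of tuples $(A_1,\dots,A_n,v_1,\dots,v_n)$ is shown to be irreducible because it is a projective-space bundle over the irreducible $Y$, and $X$ is its (closed) projection to $(\mathbb{P}M_n(\mathbb{C}))^n$. The paper only asserts the bundle structure, and your explicit local trivialization via conjugation by $g_v$, together with the gluing of irreducible open pieces, supplies that missing justification.
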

\begin{proof}
Let's consider the set of $2n$-tuples $(A_1,A_2,\dots ,A_n,v_1,v_2,\dots, v_n)$, with $v_i$ being an eigenvector of $A_i$, and $v_1,\dots,v_n$ dependent. By the previous lemmas, this set is a projective variety, and since it is a projective space bundle over $Y$, it is irreducible. The set in the proposition is the projection of this set to $(\mathbb{P}M_n(\mathbb{C}))^n$, proving the proposition.
\end{proof}
\begin{Lemma}\label{squarefree}
(a) For any bihomogeneous polynomial $Q$ in $a+b$ complex variables, the set $$\{v\in \mathbb{C}^a|Q(v,x) \textrm{ is not square-free as a polynomial in } x\}$$ is an affine variety. 

(b) If $Q$ is square-free, then this set is not the whole $\mathbb{C}^a$.
\end{Lemma}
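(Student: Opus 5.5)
Both parts reduce to the classical fact that a polynomial $f(x)$ is square-free if and only if $f$ and its partial derivatives $\partial f/\partial x_j$ have no common non-constant factor. The key tool is the generalized resultant / subresultant, or equivalently the rank of a Sylvester-type linear map. I will phrase it through linear algebra so that the conditions are visibly polynomial in $v$.

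For (a), fix the degree $d$ of $Q$ in $x$. For each $v$, the polynomial $q_v:=Q(v,\cdot)\in\C[x_1,\dots,x_b]_d$ is either $0$ or homogeneous of degree $d$. It is not square-free exactly when $q_v=0$ or $q_v=g^2h$ for some $g$ of degree $k\ge 1$ and $h$ of degree $d-2k$. For each $k$ with $1\le k\le d/2$, consider the image of the morphism
$$\C[x]_k\times\C[x]_{d-2k}\to\C[x]_d,\qquad (g,h)\mapsto g^2h .$$
This map is bihomogeneous, so it induces a morphism of projective spaces $\mathbb{P}(\C[x]_k)\times\mathbb{P}(\C[x]_{d-2k})\to\mathbb{P}(\C[x]_d)$. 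Its image $Z_k$ is closed, because a projective variety maps to a closed set. Hence the cone over $\bigcup_k Z_k$, together with $0$, is a Zariski-closed cone $C\subseteq\C[x]_d$. The set in question is the preimage of $C$ under the polynomial map $v\mapsto q_v$, and is therefore an affine variety. (If $Q$ is only bihomogeneous with multidegree $(e,d)$, this still works, since $v\mapsto q_v$ is polynomial.)

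For (b), suppose $Q$ is square-free as a polynomial in all $a+b$ variables. I need one $v$ with $q_v$ square-free. The naive worry is that specializing $v$ could create squares. I would use the discriminant criterion. After a generic linear change of the $x$-coordinates, $Q$ can be taken monic in $x_b$ up to a nonzero constant factor depending only on the remaining coordinates, since the coefficient of $x_b^d$ is $Q(v,0,\dots,0,1)$, a nonzero polynomial in $v$ for generic coordinates. Square-freeness of $Q$ then implies that the discriminant $\Delta$ of $Q$ with respect to $x_b$ is a nonzero polynomial in $(v,x_1,\dots,x_{b-1})$. This step is easy over the field $\C(v,x_1,\dots,x_{b-1})$ by Gauss's lemma, as long as $Q$ has no square factor free of $x_b$. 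The genericity of the coordinate change handles this, because every irreducible factor of $Q$ involving $x$ has positive $x_b$-degree after the change.

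Factors of $Q$ depending only on $v$ are the subtle point, since $q_v$ must not become $0$. They are harmless, though: choose $v$ outside their zero set, and note that a square-free $v$-only factor contributes only a nonzero constant to $q_v$. Now pick $(v,x')$ with $\Delta(v,x')\ne0$ and the leading coefficient nonzero. Then $\Delta(v,\cdot)\ne0$, so $q_v$ has nonzero discriminant in $x_b$, hence is square-free. Any square factor $g^2$ of $q_v$ with $\deg_{x_b} g=0$ is excluded by the leading coefficient being a nonzero constant. Because the coordinates are generic, every factor $g$ of $q_v$ has positive $x_b$-degree, so no such factor exists.

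The main obstacle I expect is making this genericity argument rigorous, that is, ensuring the coordinate change works simultaneously for $Q$ and its specializations. If it becomes messy, I will fall back on the nonemptiness argument: the set from (a) is closed, so it suffices to show that it misses the generic point. That is exactly the statement that $Q$ is square-free over $\C(v)[x]$, and this follows from $Q$ being square-free over $\C[v,x]$ together with Gauss's lemma.
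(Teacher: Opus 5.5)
Your proof is correct, but it takes a different route from the paper in both parts.

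In (a), the paper first translates non-square-freeness into $\gcd(Q_v,\partial_{x_1}Q_v,\dots,\partial_{x_b}Q_v)\neq 1$. It then shows that tuples of forms with a nontrivial common factor form a closed set, namely the union of the images of the projectivized maps $(p,p_1,\dots,p_l)\mapsto(pp_1,\dots,pp_l)$. You instead parametrize the non-square-free forms directly as images of $(g,h)\mapsto g^2h$. This rests on the same closedness of images of projective varieties but skips the derivative criterion altogether. Despite your opening paragraph, neither the partial-derivative criterion nor subresultants are actually used anywhere in your argument.

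In (b), the paper argues by dimension. If every $Q_v$ were non-square-free, the common zero set of $Q$ and its $x$-partials would have codimension at most $1$ in each fiber, hence globally. That gives a common factor, which square-freeness of $Q$ is then used to rule out. Your route is different:
\begin{itemize}
\item choose generic $x$-coordinates so that the $x_b^d$-coefficient is a nonzero polynomial $c(v)$;
\item use Gauss's lemma to get $\mathrm{disc}_{x_b}Q\neq 0$;
\item specialize at a $v$ with $c(v)\neq 0$ and $\Delta(v,\cdot)\not\equiv 0$.
\end{itemize}
This is more explicit, it produces a concrete nonempty open set of good $v$, and it handles factors of $Q$ depending only on $v$ cleanly, since they divide $c(v)$.

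The paper's version passes over exactly those factors. For $Q=v_1x_1$, the factor $v_1$ divides both $Q$ and $\partial_{x_1}Q$, yet $\partial_{x_1}Q\neq 0$. So the paper's step ``$Q$ square-free, hence $\partial_{x_i}Q=0$'' really needs a restriction to $v$ with $Q_v\neq 0$, together with an argument that the codimension-one component involves $x$.

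Keep the discriminant argument as your main line rather than the generic-point fallback. The fallback would additionally need two facts: that the equations of your cone $C$ still detect non-square-freeness over $\overline{\C(v)}$, and that square-freeness over $\C(v)$ persists over $\overline{\C(v)}$. The latter is true in characteristic $0$, but it does not follow from Gauss's lemma alone.
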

\begin{proof}
For fixed $v$, let $Q_v(x)=Q(v,x)$, $x=(x_1,\dots,x_b)$. 

(a) $Q_v$ is not square-free if and only if $$\gcd\left(Q_v, \frac{\partial}{\partial x_1}Q_v, \dots, \frac{\partial}{\partial x_b}Q_v\right)\neq 1.$$ (Proof: If $Q_v=\prod\limits_ip_i$ is the irreducible decomposition of $Q_v$, then $$\frac{\partial}{\partial x_j}Q_v=\sum \limits_{k}\frac{\partial}{\partial x_j}p_k\prod\limits_{i\neq k}p_i.$$ $p_l$ divides all terms except $\frac{\partial}{\partial x_j}p_l\prod\limits_{i\neq l}p_i$. If $p_l$ appears twice, then it divides this term too, so it divides the gcd. If it only appears once, then $p_l$ does not divide $\prod\limits_{i\neq l}p_i$, so it must divide $\frac{\partial}{\partial x_j}p_l$, which is a polynomial of lower degree, so it must be 0. So if $p_l$ divides the gcd, then all its partial derivatives are 0, a contradiction.)

Now we will prove that $$\left\{(p_1,\dots,p_l)\in\prod\limits_i\mathbb C[x_1, \dots, x_k]_{d_i}|\gcd(p_1,\dots,p_l)\neq 1\right\}$$ is a variety. (This would prove (a) as the coefficients of $Q_v$,  $\frac{\partial}{\partial x_1}Q_v$, \dots, $\frac{\partial}{\partial x_b}Q_v$ depend polynomially on $v$.)

This is because it is the union of the images of the multihomogeneous polynomial maps $$f_j:\mathbb C[x_1, \dots, x_k]_{j}\times\prod\limits_i\mathbb C[x_1, \dots, x_k]_{d_i-j}\rightarrow\prod\limits_i\mathbb C[x_1, \dots, x_k]_{d_i},$$ $f_j(p, p_1, \dots, p_l)=(pp_1, \dots, pp_l)$ as $j$ goes from $1$ to $\min\limits_i d_i$. We can think of $f_j$ as a map between products of projective spaces, and it is known that under a polynomial map between projective spaces, the image of a projective variety is a projective variety.

(b) Assume that the set is the whole $\mathbb{C}^a$. We will look at the set $$0=Q=\frac{\partial}{\partial x_1}Q= \dots= \frac{\partial}{\partial x_b}Q.$$ On the fiber over any point $v\in\mathbb{C}^a$, this set has codimension at most $1$ (since $Q_v, \frac{\partial}{\partial x_1}Q_v, \dots$ have a common divisor), so the whole set has codimension at most $1$, which means that $Q,\frac{\partial}{\partial x_1}Q, \dots, \frac{\partial}{\partial x_b}Q$ have a common divisor. $Q$ is square-free, so this means that $\frac{\partial}{\partial x_i}Q=0$ for all $i$, meaning that $Q$ does not depend on $x$. This is a contradiction, as constants are square-free.
\end{proof}
\begin{Pro}
The smallest degree polynomial defining the set $X$ has degree $n^n\binom{n}{2}$.
\end{Pro}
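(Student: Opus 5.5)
The plan is to show that $X$ is an irreducible hypersurface. Then its defining polynomial is an irreducible $R$, and $P = R^m \cdot \prod_{|K|>1} D_K^{c_K}$ (up to a constant), where $c_K \ge n^{n-|K|}$. The degree of $R$ is then obtained by subtracting the degrees of the $D_K$ factors from $\deg P$.

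\textbf{Degree of $P$.} First compute $\deg P$. Each entry of $A$ is linear in the entries of the $A_i$. Hence row $k$ of $M$ (that is, $wA^k$) has degree $k$, and
$$\deg P=\sum_{k=0}^{n^n-1}k=\binom{n^n}{2}.$$

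\textbf{$X$ is a hypersurface.} By Proposition \ref{n-irred}, $X$ is irreducible. By the first proposition, $X\subseteq\{P=0\}$. By the second one, $\{P=0\}$ is contained in $X$ union the zero sets of the $D_K$ (these contain the locus where $A$ has a multiple eigenvalue). I also need $X$ to have codimension $1$. A dimension count suffices: the incidence variety over $Y$ has dimension $(n^2-n)+\dim Y=n^2\cdot n-1$ in projective terms, so $X$ is a hypersurface provided a generic point of $X$ has finitely many preimages. Alternatively, $P\not\equiv 0$ (take generic $A_i$ with independent eigenvectors) and $X$ is not contained in any $D_K=0$ (a generic point of $X$ has distinct eigenvalues). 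Either way, the irreducible components of $\{P=0\}$ are $X$ and the $\{D_K=0\}$, and $P=c\,R^m\prod_K D_K^{c_K}$. The factors with $|K|=1$ are excluded: $d_K$ for $|K|=1$ is the discriminant of $A_i$, and generic $A_i$ with a repeated eigenvalue does not force $P=0$.

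\textbf{Exact exponents.} Next I need $m=1$ and $c_K=n^{n-|K|}$ exactly, rather than the lower bound from the divisibility proposition. This is where Lemma \ref{squarefree} comes in. I would restrict $P$ to a suitable pencil or line of tuples and show that the restricted polynomial is square-free along $X$. Concretely, fix generic $A_2,\dots,A_n$ and let $A_1$ vary in a family. Part (b) of Lemma \ref{squarefree} applies once a single square-free restriction of the relevant factor is exhibited. For the $D_K$ exponents, the argument of the divisibility proposition shows that at a generic point of $\{D_K=0\}$ the kernel of $M$ has dimension exactly $n^{n-|K|}$. Combined with a transversality argument (the order of vanishing equals the kernel dimension when the eigenvalue collisions are simple), this gives equality.

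\textbf{Degree count.} The degree of $D_K$ is half the number of ordered pairs $(f,g)$ with $f(i)\neq g(i)$ for all $i\in K$, so
$$\deg D_K=\tfrac12\,(n(n-1))^{|K|}.$$
Then
$$\deg R=\binom{n^n}{2}-\sum_{|K|\ge2}\binom{n}{|K|}\,n^{n-|K|}\,\tfrac12\,(n(n-1))^{|K|}.$$
The sum over all $K$, including $|K|=1$ and $K=\emptyset$, is
$$\tfrac12\,n^n\sum_k\binom nk(n-1)^k=\tfrac12\,n^{2n}.$$
Removing the $|K|=1$ and $K=\emptyset$ terms and simplifying should give $n^n\binom n2$. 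I would check this carefully, since the arithmetic must match the claimed value exactly.

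The main obstacle is proving the exponents are exact ($m=1$ and $c_K=n^{n-|K|}$). Degree bookkeeping alone only gives an upper bound for $\deg R$, so this step cannot be skipped. I would try to handle it through a local computation at a generic point of each component, using Lemma \ref{squarefree} to pass from one example to genericity.
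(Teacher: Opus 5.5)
\textbf{Gap: the exponents are assumed, not proved.} Your bookkeeping is correct. Indeed $\deg P=\binom{n^n}{2}$ and $\deg D_K=\frac12(n(n-1))^{|K|}$. If $P=c\,R\prod_{|K|\ge 2}D_K^{n^{n-|K|}}$ held, the subtraction would give exactly $n^n\binom n2$; this is the computation the paper performs in its final Theorem. The problem is the step you flag yourself. Given $m\ge 1$ and $c_K\ge n^{n-|K|}$, the equality $\deg R=n^n\binom n2$ holds if and only if $m=1$ and every $c_K=n^{n-|K|}$. So that step is equivalent to the statement, and nothing in the proposal proves it.

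Knowing the corank of $M$ at a generic point of a component of $\{P=0\}$ only bounds the order of vanishing of $\det M$ there from below; that is all \cite[Lemma 3.2.]{tri} gives. Moreover, the divisibility argument only yields $\dim\ker M\ge n^{n-|K|}$ on $\{D_K=0\}$, not equality. Equality of order and corank needs a first-order nondegeneracy computation. For $m=1$ you would need $y\,M'(0)\,x\neq 0$ along some curve crossing $X$ transversally, where $x,y$ span the kernel and cokernel of $M(0)$. Along $\{D_K=0\}$ you would need a harder analogue in corank $n^{n-|K|}$. You do not carry out either.

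Lemma \ref{squarefree} cannot supply this. It only says that a polynomial already known to be square-free stays square-free after generic specialization of some variables. It says nothing about the multiplicity with which $R$ or $D_K$ occurs in $P$. As written you only get $m\deg R\le n^n\binom n2$.

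\textbf{How the paper gets the degree.} The paper never touches multiplicities in $P$; it computes $\deg R$ directly, one matrix at a time.
\begin{itemize}
\item Fix $A_1,\dots,A_{n-1}$ generically: distinct eigenvalues, eigenvectors in general position, and (by Lemma \ref{squarefree}) $R$ still square-free as a polynomial in $A_n$.
\item The fiber of $X$ is then the union of the $n^{n-1}$ irreducible hypersurfaces $S_V$, where $V=\langle v_1,\dots,v_{n-1}\rangle$ and $v_i$ is an eigenvector of $A_i$.
\item Each $S_V$ has degree $\binom n2$. This comes from the irreducible polynomial $T(-,B)$ of \cite{tri}, of degree $n\binom n2$ in its first argument. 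For generic $B$ it splits as the product of the $Q_V$ over the $n$ invariant hyperplanes of $B$.
\item Hence $R$ has degree $n^{n-1}\binom n2$ in each $A_i$, so its total degree is $n^n\binom n2$.
\end{itemize}
In the paper, exactness of the exponents is a consequence of this Proposition: the final Theorem combines it with the divisibility results and your arithmetic. Your route runs the logic backwards and needs as input what the paper obtains as output.

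A minor point: your dimension formula for the incidence variety is garbled; its dimension is $n(n-1)-1+n(n^2-n)=n^3-n-1$. Your alternative argument that $X$ is a component of $\{P=0\}$ is fine.
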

\begin{proof}
We will prove that this polynomial is homogeneous of degree $n^{n-1}\binom{n}{2}$ in each of the $A_i$ (this would prove the proposition). As this is symmetric, we will only prove it for $A_n$. Fix the other matrices generically: none of them have a multiple eigenvalue, there are no dependent vectors $v_1,\dots, v_{n-1}$ with $v_i$ being an eigenvector of $A_j$ for some $j\in [n-1]$, and fixing these matrices leaves the defining polynomial square-free as a polynomial of $A_n$ (we can do this by \ref{squarefree}). $(A_1,\dots,A_n)\in X\Leftrightarrow A_n$ has an eigenvector on one of the hyperplanes generated by $v_1,\dots, v_{n-1}$ with eigenvector $v_i$ of $A_i$. There are $n^{n-1}$ such hyperplanes, and for a given hyperplane $V$, the set of matrices $A_n$ having an eigenvector on it is an irreducible variety (it goes similarly to \ref{n-irred}): it is the projection of the set $$\{(A,v)\in \mathbb{P}M_n(\mathbb{C})\times\mathbb{P}(\mathbb{C}^n)|v \text{ is an eigenvector of } A\text{ on }V\},$$ which is a variety, because it is defined by the equations  in \ref{eigenvector} and the equation defining $V$, and is irreducible, since it is a projective space bundle over the projective subspace $V$. We will call this variety $S_V$.

Now we define $T(-,-)$ as in \cite{tri}, which is an irreducible homogeneous polynomial of degree $n\binom{n}{2}$ in the entries of the first variable matrix, and is $0$ if and only if the first matrix has an eigenvector that is contained in some $n-1$ dimensional invariant subspace of the second. If we fix a matrix $B$ with distinct eigenvalues and $T(-,B)$ square-free (we can do this by \ref{squarefree}), $T(-,B)=0$ if and only if the variable matrix is in $S_V$ for some $V$ $n-1$ dimensional invariant subspace of $B$. This means that $S_V$ is the vanishing set of some irreducible polynomial which we will call $Q_V$, and $T(-,B)$ is the product of these $Q_V$, where $V$ runs through all $n-1$ dimensional invariant subspaces of $B$. The degree of $Q_V$ is the degree of $T(-,B)$ divided by the number of these subspaces:$\binom{n}{2}$

The minimum degree polynomial defining $X$ (for fixed $A_1$, $\dots$, $A_{n-1}$) is the product of $Q_V$, where $V$ goes through all subspaces $\langle v_1$, $\dots$, $v_{n-1}\rangle$, where $v_i$ is an eigenvector of $A_i$.

This proves that the polynomial is homogeneous of degree $n^{n-1}\binom{n}{2}$ in $A_n$.
\end{proof}
We have already proved (\ref{pol}) that if the polynomial defining $V$ is $\sum v_ix_i=0$ for some $0\neq v=(v_1,\dots,v_n)\in (\C^n)^*$, then
$$B\in S_V\Leftrightarrow\det \begin{pmatrix} v\\ vB\\ \vdots \\vB^{n-1}\end{pmatrix}=0,$$ but now we can prove something stronger:
\begin{Rem} If the polynomial defining $V$ is $\sum v_ix_i=0$ for some $v=(v_1,\dots,v_n)\in (\C^n)^*$, then
$$Q_V(B)=\det \begin{pmatrix} v\\ vB\\ \vdots \\vB^{n-1}\end{pmatrix}$$
\end{Rem}
\begin{proof}
By \ref{pol}, we know that these two polynomials have the same vanishing set, they have the same degree, and $Q_V$ is irreducible.
\end{proof}
\begin{Th}
$X$ is the vanishing set of the irreducible polynomial $$\frac{P}{\prod\limits_{|K|>1}D_K^{n^{n-|K|}}}$$
\end{Th}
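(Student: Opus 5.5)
Let $R$ denote the minimal-degree polynomial defining $X$; it is irreducible by Proposition \ref{n-irred}. The plan is to show three things: $R$ divides $P$; the quotient $P/R$ is divisible by $\prod_{|K|>1}D_K^{n^{n-|K|}}$; and degree counting forces $P = c\,R\prod_{|K|>1}D_K^{n^{n-|K|}}$ for a nonzero constant $c$.

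\emph{Step 1: $R\mid P$.} By the first proposition of this section, $P$ vanishes on $X$, and $R$ is irreducible, so $R\mid P$. We also need $R$ not to be one of the $D_K$. For $|K|>1$, $D_K$ does not depend on $A_i$ for $i\notin K$, whereas $R$ depends on every $A_i$ (it has degree $n^{n-1}\binom n2$ in each). For $|K|=n$ (when $n>1$) we compare vanishing sets instead: a generic $n$-tuple in $X$ has $A$ with simple spectrum, so $D_{[n]}\neq 0$ there. Since $D_K^{n^{n-|K|}}\mid P$ by the earlier proposition and these factors are pairwise coprime and coprime to $R$, their product times $R$ divides $P$.

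\emph{Step 2: degree count.} The rows of $M$ are $wA^k$, which have degree $k$ in the entries of the $A_i$. Hence $\deg P \le \sum_{k=0}^{n^n-1}k = n^n(n^n-1)/2$, and $P$ is homogeneous of that total degree in the $A_i$ jointly. On the other side, $d_K$ has degree $\tfrac12\#\{(f,g): f(i)\ne g(i)\ \forall i\in K\} = \tfrac12 (n(n-1))^{|K|}$, so $\deg D_K = \tfrac14 (n(n-1))^{|K|}$ for $|K|>1$. I would then verify
\[
n^n\binom n2+\sum_{k=2}^n \binom nk n^{n-k}\tfrac14(n(n-1))^k=\tfrac{n^n(n^n-1)}2 .
\]
Writing $\binom n2=\tfrac14\cdot 2n(n-1)$ makes the $k=1$ term $\tfrac14\cdot 2n^{n-1}\cdot n(n-1)$, which is twice the $k=1$ binomial term. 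The left side therefore equals $\tfrac14\bigl(n^n\sum_{k=0}^n\binom nk(n-1)^k + n^n(n-1)\cdot n\cdot n^{-1}\cdots\bigr)$. Concretely, $\sum_{k\ge0}\binom nk n^{n-k}(n(n-1))^k=n^n\cdot n^n$, so the left side is $\tfrac14(n^{2n}-n^n)+\tfrac14 n^{n}(n-1)\,n\cdot\tfrac{1}{1}\ldots$; the bookkeeping needs care and I expect it to close to exactly $\tfrac12 n^n(n^n-1)$. Equivalently, the quotient equals half the number of unordered pairs $f\ne g$ contributing eigenvalue coincidences, as the decomposition of $\prod_{f\neq g}$ displayed earlier suggests. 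Once degrees match, $P/\prod D_K^{n^{n-|K|}}$ is $R$ times a constant.

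\emph{Step 3: $P\not\equiv 0$.} This is needed so the constant is nonzero. I would take all $A_i$ diagonal with generic entries. Then $A$ is diagonal with distinct eigenvalues, and by Lemma \ref{pol} $P\neq0$ as long as $w$ is not orthogonal to any eigenvector $\bigotimes e_{f(i)}$. That fails, since $w$ vanishes on those with $f$ non-injective. So instead take $A_i=g_iD_ig_i^{-1}$ with generic $g_i$: the eigenvectors are then tensors of generic columns, whose determinant is nonzero, so $P\neq 0$.

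The main obstacle is Step 2: getting the exact degree of $P$ (it could drop if the top-degree part vanished, but it is homogeneous, so it does not) and confirming the combinatorial identity. If the identity came out mismatched, the fallback would be to compare degrees in $A_n$ alone, using degree $n^{n-1}\binom n2$ for $R$ in $A_n$ together with the eigenvalue-factorization picture of $P$.
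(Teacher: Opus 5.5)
Your strategy is the paper's: $R\mid P$, $R$ is not any $D_K$, so $R\prod_{|K|>1}D_K^{n^{n-|K|}}$ divides $P$, and a degree count finishes the proof. Your Step 3, showing $P\not\equiv 0$ via generically conjugated diagonal matrices, is a correct detail that the paper leaves implicit.

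The gap is in Step 2, which is where the theorem is actually decided. Your value $\deg D_K=\tfrac14(n(n-1))^{|K|}$ is off by a factor of $2$, and with it the identity you display is false. For $n=2$ your left side is $4+1=5$, while $\tfrac12 n^n(n^n-1)=6$. This is why your bookkeeping would not close. The product defining $d_K$ runs over \emph{ordered} pairs $(f,g)$, and both $(f,g)$ and $(g,f)$ occur, so $\deg d_K=(n(n-1))^{|K|}$. (A homogeneous symmetric polynomial of degree $d$ in the eigenvalues is a polynomial of degree $d$ in the matrix entries.) Since $D_K$ is its square root for $|K|>1$, it has degree $\tfrac12(n(n-1))^{|K|}$.

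With the correct value, the $k=1$ term $\tfrac12\binom n1 n^{n-1}\,n(n-1)$ of the sum $\tfrac12\sum_{k=1}^n\binom nk n^{n-k}(n(n-1))^k$ is exactly $n^n\binom n2$. The binomial theorem gives $\sum_{k=0}^n\binom nk n^{n-k}(n(n-1))^k=n^{2n}$, so
\[
n^n\binom n2+\sum_{k=2}^n\binom nk n^{n-k}\,\tfrac12(n(n-1))^k=\tfrac12 n^n(n^n-1)=\deg P .
\]
Equivalently, $\deg P=\binom{n^n}{2}$ counts unordered pairs $\{f,g\}$ with $f\neq g$. The $D_K$-factors with $|K|>1$ account for the pairs differing in at least two coordinates. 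The quotient therefore has degree equal to the number of pairs differing in exactly one coordinate, which is $n\cdot n^{n-1}\binom n2$.

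Once this is fixed, no fallback is needed and your argument coincides with the paper's. For excluding $R=D_K$, one generic diagonal tuple handles every $K$ at once, as in the paper. It lies in $X$ because $e_1$ is a common eigenvector, and all $n^n$ eigenvalue sums are distinct there, so no $D_K$ vanishes. This also justifies your claim that a generic point of $X$ has $A$ with simple spectrum.
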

\begin{proof}
We already know that $X$ is the vanishing set of an irreducible divisor of $P$ with degree $n^n\binom{n}{2}$. As it clearly can't be $D_K$ for any $K$ (as shown by generic diagonal matrices), it is also a divisor of $\frac{P}{\prod\limits_{|K|>1}D_K^{n^{n-|K|}}}$. But this polynomial has degree $n^n\binom{n}{2}$, so it has to be the polynomial defining $X$.
\end{proof}
\section{The general case}
\begin{Def}
For $A_i\in M_n(\mathbb{C})$ $(i\in [l])$ and $k_i$ a partition of $n$, we define $A$ on the tensor product space $\bigotimes\limits_{i=1}^l(\mathbb{C}^n)^{\wedge k_i}$ by the following equation:\\
\begin{align*} A(\bigotimes\limits_{i=1}^l\bigwedge\limits_{j=1}^{k_i}v_{i,j})=\\\sum\limits_{i=1}^l\sum\limits_{j=1}^{k_i} \left((\bigotimes\limits_{p<i}\bigwedge\limits_{q=1}^{k_p}v_{p,q})\otimes((\bigwedge\limits_{q<j}v_{i,q})\wedge A_i(v_{i,j})\wedge(\bigwedge\limits_{q>j}v_{i,q}))\otimes(\bigotimes\limits_{p>i}\bigwedge\limits_{q=1}^{k_p}v_{p,q})\right)
\end{align*}
\end{Def}
\begin{Rem}
If $A_i$ has eigenvalues $\lambda_{i,j}$ $(i\in [l],j\in [n])$, then $A$ has eigenvalues $\sum\limits_{i=1}^l\sum \limits_{j\in F(i)}\lambda_{i,j}$ $(F:[l]\rightarrow 2^{[n]}, |F(i)|=k_i)$
\end{Rem}
\begin{Def}
$$M(A_1,\dots, A_l):=\begin{pmatrix} \bar w\\ \bar w A\\ \vdots \\ \bar w A^{\prod \binom{n}{k_i}-1}\end{pmatrix}$$ where $\bar w$ is the image of $w$ (from \ref{w}) under the canonical homomorphism $$(\mathbb{C}^n)^{\otimes n}\rightarrow \bigotimes\limits_{i=1}^l(\mathbb{C}^n)^{\wedge k_i}$$\\
$ P=\det M$
\end{Def}
\begin{Pro}
If there exists a $k_i$ dimensional invariant subspace $V_i$ of $A_i$ for each $i$, which together do not span the whole $\C^n$, then $P=0$. If $A$ has no multiple eigenvalue, the converse also holds.
\end{Pro}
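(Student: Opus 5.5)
We follow the proof of the $n$-matrix case, with the pairing between $\bar w$ and decomposable tensors replacing the determinant $\det(v_1,\dots,v_n)$. The key preliminary identity is the following. For vectors $v_{i,j}$ ($i\in[l]$, $j\in[k_i]$), the functional $\bar w$ evaluated on $\bigotimes_{i}\bigwedge_{j}v_{i,j}$ equals, up to a nonzero constant (a product of factorials $\prod k_i!$ coming from the identification of $(\C^n)^{\wedge k_i}$ as a quotient or subspace of $(\C^n)^{\otimes k_i}$), the determinant of the $n\times n$ matrix whose columns are all the $v_{i,j}$ listed in order.

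To check this identity, view $\bar w$ as a linear functional on $\bigotimes_i(\C^n)^{\wedge k_i}$. Since $w$ is the totally antisymmetric form, it is alternating in each block of $k_i$ tensor slots, so it factors through the canonical map. Its value on the image of $\bigotimes v_{i,j}$ is then $w(\bigotimes v_{i,j})=\det(v_{1,1},\dots,v_{l,k_l})$. With this identity in hand, for subspaces $V_i$ of dimension $k_i$ with bases $v_{i,1},\dots,v_{i,k_i}$, we have $\bar w(\bigotimes_i\bigwedge_j v_{i,j})=0$ if and only if the $V_i$ together do not span $\C^n$ (the $n$ vectors are dependent).

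\emph{Forward direction.} Suppose $V_i$ is $A_i$-invariant of dimension $k_i$. Then $\bigwedge_j v_{i,j}$ spans the line $\wedge^{k_i}V_i$. Since $A_i$ preserves $V_i$, the derivation action of $A_i$ on $(\C^n)^{\wedge k_i}$ preserves this line, acting on it by $\operatorname{tr}(A_i|_{V_i})$. Hence $u:=\bigotimes_i\bigwedge_j v_{i,j}$ is an eigenvector of $A$, and $\bar w u=0$ by the identity above. Lemma~\ref{pol} applies verbatim to the space $\bigotimes_i(\C^n)^{\wedge k_i}$ of dimension $\prod\binom n{k_i}$; its proof only uses that an invariant subspace contains an eigenvector. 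Hence $P=\det M=0$.

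\emph{Converse.} Assume $A$ has no multiple eigenvalue and $P=0$. By Lemma~\ref{pol}, $A$ has an eigenvector $u$ with $\bar w u=0$. We need an analogue of the earlier lemma on the form of eigenvectors, and this is the main step. Since the eigenvalues $\sum_i\sum_{j\in F(i)}\lambda_{i,j}$ of $A$ are pairwise distinct, each $A_i$ has distinct eigenvalues. Indeed, if $\lambda_{i,a}=\lambda_{i,b}$, take $F$ with $a\in F(i)$, $b\notin F(i)$ and swap $a$ and $b$; this gives two different $F$ with equal sums, a contradiction. This uses $1\le k_i\le n-1$; a block with $k_i=n$ is a one-dimensional factor on which $A_i$ acts by $\operatorname{tr}A_i$, and it can be dropped, since then every other $k_j=0$, which is excluded, or it is trivial. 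So each $A_i$ has an eigenbasis $\{v_{i,1},\dots,v_{i,n}\}$. The vectors $\bigotimes_i\bigwedge_{j\in F(i)}v_{i,j}$ are then eigenvectors of $A$ with eigenvalue $\sum_i\sum_{j\in F(i)}\lambda_{i,j}$, and they form a basis of $\bigotimes_i(\C^n)^{\wedge k_i}$. Because the eigenvalues are simple, $u$ is a scalar multiple of one of them. Set $V_i=\langle v_{i,j}:j\in F(i)\rangle$, which is $A_i$-invariant of dimension $k_i$. Then $0=\bar w u$ equals, up to the nonzero constant, the determinant of the combined basis vectors. So these vectors are dependent and the $V_i$ do not span $\C^n$.

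The point needing the most care is the normalization in the identity for $\bar w$: one must make sure the constant relating $\bar w(\bigotimes\bigwedge v_{i,j})$ to the determinant is nonzero, whichever model of the exterior power is used. I would settle this by computing both sides on $e_1,\dots,e_n$ and using multilinearity and alternation.
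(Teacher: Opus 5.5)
Your proof is correct and follows the same route as the paper. In the forward direction you exhibit $\bigotimes_i\bigwedge_j v_{i,j}$ as an eigenvector of $A$ killed by $\bar w$ and apply Lemma~\ref{pol}; in the converse, the simple spectrum forces the eigenvector orthogonal to $\bar w$ to be such a decomposable tensor. Your version is slightly more careful than the paper's. Taking an arbitrary basis of $V_i$ and using the trace action on $\wedge^{k_i}V_i$ avoids the paper's tacit assumption that $A_i|_{V_i}$ has an eigenbasis, and you actually justify (for $1\le k_i\le n-1$) that a simple spectrum of $A$ forces each $A_i$ to have distinct eigenvalues, which the paper only asserts.
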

\begin{proof}
Let $v_{i,j}$ ($j \in [k_i]$) be an eigenbasis of $V_i$, and $$v=\bigotimes\limits_{i=1}^l\bigwedge\limits_{j=1}^{k_i}v_{i,j}$$

We have $$\bar wv=\det(v_{1,1},\dots,v_{l,k_l})=0$$ and $v$ is an eigenvector of $A$, so we can use \ref{pol}.

For the converse, if $A$ has no multiple eigenvalue (so none of the $A_i$ have either), then each eigenvector of $A$ can be written in the form $\bigotimes\limits_{i=1}^l\bigwedge\limits_{j=1}^{k_i}v_{i,j}$, where $\langle v_{i,1},\dots v_{i,k_i}\rangle$ is an invariant subspace of $A_i$ for every $i$. By \ref{pol}, $A$ has an orthogonal eigenvector to $\bar w$, but $$\bar w\bigotimes\limits_{i=1}^l\bigwedge\limits_{j=1}^{k_i}v_{i,j}=\det(v_{1,1},\dots,v_{l,k_l})$$ hence it being $0$ means that the subspaces $\langle v_{i,1}, \dots, v_{i, k_i}\rangle$ do not span the whole space.  
\end{proof}
Let's look at the polynomial $$\prod\limits_{F\neq G}\sum\limits_{i\in [l]}\left(\sum\limits_{j \in F(i)}\lambda_{i,j}-\sum\limits_{j\in G(i)}\lambda_{i,j}\right)$$ where $F$ and $G$ run through all functions that map each $i\in [l]$ to a $k_i$ element subset of $[n]$.

It decomposes into $$\prod\limits_{\substack{0\leq k'_i\leq k_i\\\exists k'_i\neq 0}}\left(\prod\limits_{F,G}\sum\limits_{i\in[l]}\left(\sum \limits_{j\in F(i)}\lambda_{i,j}-\sum\limits_{j\in G(i)}\lambda_{i,j}\right)\right)^{\prod\binom{n-2k_i'}{k_i-k_i'}},$$ where $F$ and $G$ run through all functions that map each $i\in [l]$ to a $k'_i$ element subset of $[n]$, with $F(i)\cap G(i)=\emptyset$ for all $i$. The product in brackets is invariant with respect to permutations of the $\lambda_{i,j}$ with given $i$, so there exists a unique polynomial in the entries of the $A_i$, that is equal to this polynomial of the eigenvalues, and it is invariant under conjugation (of any $A_i$). Let us call this polynomial $d_{k'}$, where $k'=(k'_1, \dots, k'_l)$.

\begin{Lemma}
When $\sum k'_i=1$, $d_{k'}$ is irreducible, otherwise it is the square of an irreducible.
\end{Lemma}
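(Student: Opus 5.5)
The proof will follow the $|K|$ case from Section 2. First I reduce any divisor of $d_{k'}$ to a symmetric function of the eigenvalues. Then I analyse the product of linear forms directly.

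\textbf{Step 1: reduction to eigenvalues.} Let $q$ be an irreducible divisor of $d_{k'}$. Conjugating any input matrix $A_i$ by an element of $GL_n(\C)$ sends $q$ to another divisor of $d_{k'}$, since $d_{k'}$ is conjugation invariant. This defines a homomorphism from $GL_n(\C)^l$ into the permutation group of the finite set of divisors of $d_{k'}$ up to scalars. Since $GL_n(\C)^l$ has no nontrivial finite quotients, $q$ is fixed up to a scalar. That scalar is a character of a connected group, hence a power of the determinant. A degree comparison, or simply taking the irreducible factor, forces it to be trivial, as in the earlier lemma. So $q$ is conjugation invariant. Restricting to diagonal matrices, $q$ becomes a polynomial in the $\lambda_{i,j}$ that is invariant under the group $\Sigma=S_n^l$, acting by permuting $j$ for each fixed $i$.

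\textbf{Step 2: orbit structure of the linear factors.} Write
$$L_{F,G}=\sum_{i}\Bigl(\sum_{j\in F(i)}\lambda_{i,j}-\sum_{j\in G(i)}\lambda_{i,j}\Bigr),$$
where $|F(i)|=|G(i)|=k'_i$ and $F(i)\cap G(i)=\emptyset$. These linear forms are pairwise non-proportional, apart from $L_{G,F}=-L_{F,G}$. The group $\Sigma$ acts transitively on the pairs $(F,G)$. A $\Sigma$-invariant divisor, being a product of some of the $L_{F,G}$ up to scalars, must therefore be a product over a union of orbits of the unordered pairs $\{L_{F,G},-L_{F,G}\}$. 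Since each unordered pair occurs exactly twice, the only possibilities are $1$, $d_{k'}$, or the square root $R=\prod_{(F,G)\in\mathcal{S}}L_{F,G}$, where $\mathcal S$ picks one of $(F,G),(G,F)$, say $F<G$ lexicographically. The root $R$ is a genuine divisor precisely when it is $\Sigma$-invariant. Since $R^2$ is invariant, $\sigma R=\pm R$. So it suffices to test the transpositions $\tau=(\lambda_{i,1}\,\lambda_{i,2})$, which generate $\Sigma$ together with the conjugations that preserve invariance of the sign character.

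\textbf{Step 3: the parity count, which is the main obstacle.} I count the number $N$ of factors of $R$ sent to minus a factor of $R$ by $\tau$. These are the pairs with $1\in F(i)$, $2\in G(i)$ (or the reverse), all other data unchanged, for which $\tau$ flips the lexicographic order. Flipping happens for those pairs whose first difference in the lexicographic order lies exactly at this position. I will fix the ordering so that index $i$ and element $1,2$ are compared first. Then $N$ equals the number of ways to complete the remaining data: choose the rest of $F(i),G(i)$ from $[n]\setminus\{1,2\}$, and choose all $F(p),G(p)$ for $p\neq i$. This gives
$$N=\binom{n-2}{k_i'-1}\binom{n-k_i'-1}{k_i'-1}\prod_{p\neq i}\binom{n}{k'_p}\binom{n-k'_p}{k'_p}.$$
(If $k_i'=0$, then $\tau$ fixes every factor and $N=0$.) When $\sum k'_p=1$, $N=1$, so $R$ is not invariant and $d_{k'}$ is irreducible. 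Otherwise I need $N$ to be even. If some $p\neq i$ has $k'_p\ge1$, the factor $\binom{n}{k}\binom{n-k}{k}$ is even, because it counts ordered pairs of disjoint $k$-subsets and the swap $(F,G)\mapsto(G,F)$ is fixed-point free. If instead only $k_i'\ge2$, the factor $\binom{n-2}{k-1}\binom{n-k-1}{k-1}$ counts ordered pairs of disjoint $(k-1)$-subsets of an $(n-2)$-set, which is even by the same involution.

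The delicate point is making the sign count consistent with the chosen lexicographic order. Pairs whose order is decided elsewhere map to another factor of $R$ with no sign change. I expect to handle this by noting that any fixed choice of $\mathcal S$ changes $R$ only by a global sign, so I may pick $\mathcal S$ adapted to each $\tau$ separately. Hence $R$ is $\Sigma$-invariant, defines a conjugation-invariant polynomial, and $d_{k'}=R^2$ with $R$ irreducible, because any proper invariant divisor would again be of the form in Step 2.
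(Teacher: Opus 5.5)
Your proposal is correct and follows the paper's proof. Divisors are conjugation invariant because $GL_n(\C)^l$ has no nontrivial finite quotients. $S_n^l$ acts transitively on the lines $\pm L_{F,G}$, so the only possible proper divisor is the lexicographic square root. Its invariance is decided by the parity of the same count $\binom{n-2}{k'_i-1}\binom{n-k'_i-1}{k'_i-1}\prod_{p\neq i}\binom{n}{k'_p}\binom{n-k'_p}{k'_p}$ of sign changes under $(\lambda_{i,1}\,\lambda_{i,2})$. Your refinements fill in details the paper leaves implicit: choosing the ordering separately for each $i$ is legitimate because the sign character does not depend on which square root is chosen, and the fixed-point-free swap involution proves the count is even. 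The one loose spot is the triviality of the scalar character in Step 1. It does not follow from a degree comparison; it follows because scalar matrices act trivially by conjugation, which forces the exponent in $\det^m$ to be $m=0$.
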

\begin{proof}
Since $d_{k'}$ is conjugation invariant, conjugations permute its divisors, and this gives us a group action of $GL_n(\mathbb{C})^{l}$ on these divisors. The stabilizer of a divisor is a finite index subgroup of $GL_n(\mathbb{C})^{l}$, but this group has no nontrivial finite index subgroups, which means that all divisors are invariant under conjugation, so they can be written as  polynomials in the eigenvalues $\lambda_{i,j}$, which are invariant under permutations for fixed $i$.

In the product $$\prod\limits_{F,G}\sum\limits_{i\in[l]}\left(\sum \limits_{j\in F(i)}\lambda_{i,j}-\sum\limits_{j\in G(i)}\lambda_{i,j}\right),$$ each factor appears twice (as switching $F$ and $G$ only changes the sign), and on the different factors, the permutations of the eigenvalues act transitively, so $d_{k'}$ can only have its square root as its divisor. We take for its square root: the product of those factors where $F$ is lexicographically smaller than $G$ (for the smallest $i$ such that $k'_i\neq 0$, $\min(F(i))<\min(G(i))$). As any permutation can only change the sign, it is enough to check if switching $\lambda_{i,1}$ and $\lambda_{i,2}$ changes the sign for any $i$. It changes the sign of a given factor if and only if $\lambda_{i,1}\in F(i)$, $\lambda_{i,2}\in G(i)$, and $k'_j=0$ if $j<i$. This means that it changes the sign of $0$ factors if $i$ isn't the smallest number such that $k'_i>0$, and $\binom{n-2}{k'_i-1}\binom{n-k'_i-1}{k'_i-1}\prod\limits_{j\neq i}\binom{n}{k'_j}\binom{n-k'_j}{k'_j}$ factors otherwise.  If $\sum\limits_i k'_i=1$, this number is $1$, which means that switching $\lambda_{i,1}$ and $\lambda_{i,2}$ changes the sign of the polynomial, so this isn't a divisor; $d_{k'}$ is irreducible. If $\sum\limits_i k'_i>1$ , an even number of factors changed signs, which means that its square root really is a conjugation invariant divisor of $d_{k'}$.
\end{proof}
Let's call this irreducible polynomial $D_{k'}$.

\begin{Rem}
For different $k'$, $D_{k'}$ is a different irreducible.
\end{Rem}
\begin{proof}
The set $$\{(-2)^1, (-2)^2,\dots, (-2)^{2a-1}, -(2^{2a}-2), (-2)^{2a+1},\dots,(-2)^b\}$$ has the property that there is only one way to choose two disjoint nonempty subsets with the same sum, and these two sets both have $a$ elements. We can use these for $b=nl$, $a=\sum k'_i$ as the eigenvalues of the $A_i$ to construct matrices for which $D_{k'}=0$ but $D_{k''}\neq 0$ for any $k''\neq k'$.
\end{proof}

\begin{Pro}
$P$ is divisible by $D_{k'}^{\prod\binom{n-2k_i'}{k_i-k_i'}}$ if $\sum k'_i>1$.
\end{Pro}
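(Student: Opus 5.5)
I will follow the proof of the analogous proposition in the $n$-matrix case. By \cite[Lemma 3.2.]{tri}, since $D_{k'}$ is irreducible, it suffices to show
$$D_{k'}=0\ \Rightarrow\ \dim\ker M\geq N,\qquad N:=\prod_i\binom{n-2k'_i}{k_i-k'_i}.$$
The condition $\dim\ker M\geq N$ is Zariski closed. The irreducible hypersurface $\{D_{k'}=0\}$ is not contained in any $\{D_{k''}=0\}$ by the preceding remark. So I may restrict to a Zariski dense subset of $\{D_{k'}=0\}$ on which $D_{k''}\neq 0$ for every $k''\neq k'$. In particular every $d_{k''}$ with $\sum k''_i=1$ is nonzero there, so each $A_i$ has distinct eigenvalues and $A$ is diagonalizable. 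Its eigenbasis consists of the vectors
$$v_F=\bigotimes_i\bigwedge_{j\in F(i)}v_{i,j},$$
where $v_{i,j}$ are eigenvectors of $A_i$ and $F$ runs over the functions sending $i$ to a $k_i$-subset of $[n]$.

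Next I count coincidences of eigenvalues. Since $D_{k'}=0$, some factor vanishes: there are $F_0,G_0$ with $|F_0(i)|=|G_0(i)|=k'_i$, $F_0(i)\cap G_0(i)=\emptyset$, and
$$\sum_i\Bigl(\sum_{F_0(i)}\lambda_{i,j}-\sum_{G_0(i)}\lambda_{i,j}\Bigr)=0.$$
For each choice of $H(i)\subseteq[n]\setminus(F_0(i)\cup G_0(i))$ with $|H(i)|=k_i-k'_i$, set $F=F_0\cup H$ and $G=G_0\cup H$. Then $F\neq G$ and $A$ has the same eigenvalue on $v_F$ and $v_G$. This gives exactly $N$ pairs $\{F,G\}$, and they are pairwise distinct because $H$ is recovered as $F\cap G$. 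For each pair, the plane $\langle v_F,v_G\rangle$ consists of eigenvectors of $A$, and it contains a nonzero vector $u$ with $\bar w u=0$, since $\bar w$ is a single linear condition.

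By the argument of Lemma \ref{pol}, any eigenvector $u$ with $\bar w u=0$ satisfies $\bar w A^k u=0$ for all $k$, so $u\in\ker M$. It remains to show that the $N$ vectors so obtained are linearly independent. Each lies in a different plane $\langle v_F,v_G\rangle$, and these planes are spanned by disjoint subsets of an eigenbasis, provided no index $F$ occurs in two pairs. This is the step I expect to be the main obstacle. If two pairs shared an index, say $F$ is paired with both $G$ and $G'$, then three eigenvalues would coincide or two independent coincidences would hold. I would exclude this by working at a generic point of $\{D_{k'}=0\}$, where only the single factor $(F_0,G_0)$ and its reverse vanish, so that all coincidences among eigenvalues of $A$ come from that one relation. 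Then the pairs $\{F_0\cup H,G_0\cup H\}$ have pairwise disjoint index sets: $F_0\cup H=G_0\cup H'$ is impossible because the two sides meet $F_0(i)\cup G_0(i)$ in different sets.

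Genericity should follow because the vanishing of a second, different factor cuts out a proper closed subset of the irreducible hypersurface. This is visible from the explicit eigenvalue choice in the previous remark, where exactly one pair of disjoint subsets has equal sums. Taking the union of the chosen vectors then gives $\dim\ker M\geq N$ on a dense subset, hence everywhere on $\{D_{k'}=0\}$, and \cite[Lemma 3.2.]{tri} yields the divisibility.
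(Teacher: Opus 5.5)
Your proof is correct and follows the paper's argument. You reduce via \cite[Lemma 3.2.]{tri} to showing $\dim\ker M\geq N$ on $\{D_{k'}=0\}$, pass to a dense subset where each $A_i$ has distinct eigenvalues, and take from each of the $N$ planes $\langle v_{F_0\cup H},v_{G_0\cup H}\rangle$ an eigenvector annihilated by $\bar w$. You also supply details the paper leaves implicit: the explicit pairs and the linear independence of the resulting kernel vectors. Your disjoint-index-set argument already gives this independence once the $v_F$ form a basis, so the extra genericity (only one factor of $d_{k'}$ vanishing) is not needed.
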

\begin{proof}
By \cite[Lemma 3.2.]{tri}, it is enough to prove that $D_{k'}=0$ implies that $\dim \ker M\geq \prod\binom{n-2k_i'}{k_i-k_i'}$.\\
And because $\dim \ker M\geq \prod\binom{n-2k_i'}{k_i-k_i'}$ is a Zariski-closed set, we may assume that $D_{k''}\neq 0$ if $\sum k_i''=1$, therefore $A$ is diagonalizable.\\
Since $D_{k'}=0$, $A$ has $\prod\binom{n-2k_i'}{k_i-k_i'}$ distinct pairs of eigenvectors with equal eigenvalues, each spanning an eigenvector orthogonal to $\bar w$, hence in the kernel of $M$.
\end{proof}
\begin{Lemma}\label{l-sub}
The set $$\{(V_1,\dots,V_l)\in \prod \Gr_{k_i}(n)|\langle V_1,\dots,V_l\rangle\neq \mathbb{C}^n\}$$ is an irreducible projective variety.    
\end{Lemma}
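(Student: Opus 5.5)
Here $\sum k_i=n$, so each $l$-tuple of subspaces has total dimension $n$, and the condition is that the tuple fails to span $\C^n$. I would prove the lemma along the lines of the Lemma on $Y$ in the first section, using the Plücker embedding. The plan is to show the set is Zariski closed in the projective variety $\prod \Gr_{k_i}(n)$, and then to exhibit it as the image of an irreducible variety under a regular map.

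\emph{Closedness.} Write $V_i=\langle v_{i,1},\dots,v_{i,k_i}\rangle$ and let $p_i=\bigwedge_{j} v_{i,j}\in(\C^n)^{\wedge k_i}$ be its Plücker vector. Then $V_1,\dots,V_l$ fail to span $\C^n$ if and only if
$$\det(v_{1,1},\dots,v_{l,k_l})=0.$$
This determinant is the image of $p_1\otimes\dots\otimes p_l$ under the wedge pairing $\bigotimes_i(\C^n)^{\wedge k_i}\to(\C^n)^{\wedge n}\cong\C$, so it equals $\bar w(p_1\otimes\dots\otimes p_l)$. It is therefore a multihomogeneous polynomial, linear in each Plücker coordinate vector, and its vanishing locus is a closed subvariety of $\prod\Gr_{k_i}(n)\subseteq\prod\mathbb P((\C^n)^{\wedge k_i})$.

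\emph{Irreducibility.} I would parametrize the set by the incidence variety
$$Z=\{(H,V_1,\dots,V_l)\in \Gr_{n-1}(n)\times\prod\Gr_{k_i}(n)\mid V_i\subseteq H \text{ for all } i\}.$$
A tuple of subspaces fails to span $\C^n$ exactly when all $V_i$ lie in a common hyperplane $H$, so the projection of $Z$ to $\prod\Gr_{k_i}(n)$ is exactly our set. The incidence conditions $V_i\subseteq H$ are closed, so $Z$ is a projective variety. Moreover, the projection $Z\to\Gr_{n-1}(n)\cong\mathbb P^{n-1}$ is a fiber bundle, locally trivial because $GL_n$ acts transitively on hyperplanes. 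Its fiber is $\prod\Gr_{k_i}(n-1)$, which is irreducible. Since the base and the fiber are both irreducible, $Z$ is irreducible. The image of an irreducible projective variety under a morphism is an irreducible closed set, so the set in the lemma is an irreducible projective variety. This second argument already gives closedness on its own, so the Plücker step serves only as an explicit equation.

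\emph{Main obstacle.} The only delicate point is justifying that a locally trivial bundle with irreducible base and irreducible fiber has irreducible total space. I would handle it as in Proposition~\ref{n-irred}: trivialize $Z$ over the standard affine charts of $\mathbb P^{n-1}$, so each piece is a product of irreducible varieties. These open pieces pairwise intersect nontrivially, and a space covered by irreducible open sets with pairwise nonempty intersections is irreducible.

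An alternative would be to show that $\det$ is irreducible on the product of Grassmannians. That is harder, since the homogeneous coordinate ring of a Grassmannian is not a UFD in the naive sense, so I would avoid it.
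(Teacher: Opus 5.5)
Your proof is correct, but your irreducibility argument differs from the paper's. On closedness you and the paper agree: $\det(v_{1,1},\dots,v_{l,k_l})$ is multilinear in the Plücker coordinates.

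For irreducibility, the paper lets the connected group $GL_n(\C)$ act and claims it has a dense orbit: the tuples that span an $(n-1)$-dimensional subspace with any $l-1$ of them independent. It checks that these tuples form a single orbit. To do so it picks $v_{i,1}\in V_i$ with $\sum_i v_{i,1}=0$, completes these to bases, and drops $v_{l,1}$ to get $n-1$ independent vectors. The density of this orbit, however, is only asserted.

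Your incidence variety $Z$ over $\Gr_{n-1}(n)\cong\mathbb{P}^{n-1}$ avoids that issue. Every point of the set lies in the image of $Z$ by construction. $Z$ is irreducible by the chart-wise trivialization you describe: over $\phi_n\neq 0$, for example, the map $e_i\mapsto e_i-\phi_i e_n$ ($i<n$), $e_n\mapsto e_n$ depends regularly on $\phi$ and carries $\{x_n=0\}$ onto $\ker\phi$. So nothing about limits or closures has to be checked. Your route also gives closedness a second time, via properness of the projection. It never uses $\sum_i k_i=n$, only $k_i\le n-1$, i.e.\ $l\ge 2$. The paper's single-orbit description does depend on $\sum_i k_i=n$.

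What the paper's approach buys is an explicit homogeneous open dense stratum of the variety. A natural way to justify its density claim is in fact your argument: perturb the $V_i$ inside a common hyperplane into general position.

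One small correction to your closing aside: the Plücker coordinate ring of a Grassmannian is in fact a UFD. Nothing in your proof depends on that remark.
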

\begin{proof}
If we denote a basis of $V_i$ by $v_{i,1},\dots,v_{i,k_i}$, being in the set is equivalent to $\det(v_{1,1},\dots,v_{l,k_l})=0$, which is a multilinear function of the Plücker coordinates, proving that this is a variety. The natural action of the connected algebraic group $GL_n(\mathbb{C})$ has a dense orbit, namely the $l$-tuples that span an $n-1$ dimensional subspace, and any $l-1$ of them are independent. (This is indeed an orbit: If we take a nonzero vector $v_{i,1}$ of $V_i$ for each $i$ such that $\sum v_{i,1}=0$, extend $v_{i,1}$ to a basis $v_{i,1}, \dots,v_{i, k_i}$ of $V_i$, and drop $v_{l,1}$, then we get linearly independent vectors. If we take another point $(V'_1,\dots,V'_l)$ of the orbit, corresponding vectors $v'_{i,j}$, and an invertible linear map that maps $v_{i,j}$ to $v'_{i,j}$ if $(i,j)\neq (l,1)$, then $v_{l,1}$ will automatically be mapped to $v'_{l,1}$, so $(V_1,\dots,V_l)$ will be mapped to $(V'_1,\dots,V'_l)$.) This proves irreducibility.
\end{proof}
\begin{Pro}\label{l-var}
The set $\bar X$ of $l$-tuples $(A_1,\dots,A_l)\in (\mathbb{P}M_n(\mathbb{C}))^l$, such that there exist subspaces $V_1,\dots,V_l$ with $V_i$ invariant under $A_i$, $\dim V_i=k_i$ and $\langle V_1, \dots V_l\rangle\neq \C^n$  is an irreducible projective variety.
\end{Pro}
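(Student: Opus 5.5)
I will imitate the proof of \ref{n-irred}, replacing eigenvectors by invariant subspaces and the variety $Y$ by the variety of Lemma \ref{l-sub}. Consider the incidence set
$$Z=\Big\{(A_1,\dots,A_l,V_1,\dots,V_l)\in \prod_i \mathbb{P}M_n(\C)\times\prod_i\Gr_{k_i}(n)\ \Big|\ A_iV_i\subseteq V_i,\ \langle V_1,\dots,V_l\rangle\neq\C^n\Big\}.$$
The set $\bar X$ is the image of $Z$ under the projection to $(\mathbb{P}M_n(\C))^l$. Since the Grassmannians are projective, this projection is a closed map, and the image of an irreducible variety is irreducible. It therefore suffices to show that $Z$ is a closed, irreducible subvariety.

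\emph{Closedness.} The second condition is closed by Lemma \ref{l-sub}. For the first, I replace Lemma \ref{eigenvector} by the analogous statement for subspaces. Take a basis $v_1,\dots,v_k$ of $V$, so that $V$ corresponds to the decomposable vector $v_1\wedge\dots\wedge v_k$ in Plücker coordinates. Then $AV\subseteq V$ holds if and only if
$$v_1\wedge\dots\wedge v_k\wedge Av_j=0\quad\text{for all } j.$$
Equivalently, $\omega\wedge A^{\wedge}(\omega)$-type contractions vanish, where $\omega$ is the Plücker vector. Concretely, one can write the condition as the vanishing of all $(k+1)\times(k+1)$ minors of the $n\times 2k$ matrix $(v_1,\dots,v_k,Av_1,\dots,Av_k)$. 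These minors are bihomogeneous in the entries of $A$ and in the Plücker coordinates, since each is a polynomial in $k\times k$ minors of the $v$'s (Laplace expansion along the first $k$ columns). Alternatively, and more simply, I will just say that the incidence $\{(A,V): AV\subseteq V\}$ is closed locally in the standard affine charts of the Grassmannian, where $V$ is the column space of $\binom{I}{C}$ and the condition is polynomial in $A$ and $C$. Either way, $Z$ is closed.

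\emph{Irreducibility.} Look at the projection $Z\to W$, where $W\subseteq\prod\Gr_{k_i}(n)$ is the irreducible variety of Lemma \ref{l-sub}. The fiber over $(V_1,\dots,V_l)$ is the product of the projectivizations of the linear spaces $\{A_i: A_iV_i\subseteq V_i\}$. Each of these is a parabolic subalgebra of dimension $n^2-k_i(n-k_i)$, which is independent of $V_i$. Hence $Z$ is a locally trivial product of projective space bundles over $W$. Concretely, $GL_n(\C)$ acts transitively on $\Gr_{k_i}(n)$, and over an affine chart the fibers are trivialized by a local section of $GL_n\to\Gr_{k_i}(n)$. So $Z$ is an irreducible base with equidimensional irreducible (linear) fibers of constant dimension, hence irreducible.

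The main obstacle is writing the invariance condition cleanly as a bihomogeneous closed condition in terms of Plücker coordinates. I would sidestep it with the chart argument above, since closedness is local. With that settled, the bundle argument is the same as in \ref{n-irred}.
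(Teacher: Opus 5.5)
Your proposal is correct and follows the paper's proof. It uses the same incidence variety $Z$ over the irreducible variety of Lemma \ref{l-sub}, concludes irreducibility because the fibers are (products of) projective spaces of constant dimension, and then projects to $(\mathbb{P}M_n(\C))^l$; your affine-chart closedness argument and $GL_n(\C)$-trivialization supply the details that the paper delegates to \cite[Lemma 3.8.]{tri}.

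The one slip is the Laplace-expansion remark, which is false as stated. The $(k+1)\times(k+1)$ minors of $(v_1,\dots,v_k,Av_1,\dots,Av_k)$ depend on the chosen basis of $V$, not only on its Plücker coordinates. A correct Plücker formulation is that $\omega$ is an eigenvector of the derivation action of $A$ on $\wedge^k\C^n$. Your chart argument does not depend on that remark, so the proof stands.
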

\begin{proof}
The set of $2l$-tuples $(A_1,\dots,A_l,V_1,\dots,V_l)$ such that $V_i$ is a $k_i$ dimensional invariant subspace of $A_i$, and $\langle V_1,\dots,V_l\rangle\neq \mathbb{C}^n$ form a projective variety by the previous lemma and \cite[Lemma 3.8.]{tri}. Since it has projective spaces as fibers over the variety from the previous lemma, it is irreducible. The set in the proposition is the projection of this set to the first $l$ variables, so it inherits these properties.
\end{proof}
\begin{Pro}
The degree of the smallest degree polynomial defining $\bar X$ is $\sum\limits_{i=1}^l \binom{n}{2}\binom{n-2}{k_i-1}\prod\limits_{j\neq i}\binom{n}{k_j}$
\end{Pro}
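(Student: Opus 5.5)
We follow the strategy of the proof of the analogous proposition for $n$ matrices. It suffices to show that the minimal defining polynomial of $\bar X$ is homogeneous of degree
$$\binom{n}{2}\binom{n-2}{k_i-1}\prod_{j\neq i}\binom{n}{k_j}$$
in the entries of $A_i$, for each $i$; the total degree is then the stated sum. By symmetry we treat $i=l$.

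\textbf{Fixing the other matrices.} We fix $A_1,\dots,A_{l-1}$ generically, and require the following.
\begin{itemize}
\item No $A_j$ has a multiple eigenvalue. Then $A_j$ has exactly $\binom{n}{k_j}$ invariant $k_j$-dimensional subspaces, namely the spans of eigenvector subsets.
\item For every choice of invariant subspaces $V_1,\dots,V_{l-1}$, the span $W=\langle V_1,\dots,V_{l-1}\rangle$ has dimension $n-k_l$. This holds generically since $\sum_{j<l}k_j=n-k_l$.
\item The defining polynomial restricted to $A_l$ stays square-free. This is possible by Lemma \ref{squarefree}.
\end{itemize}
Under these conditions, $(A_1,\dots,A_l)\in\bar X$ if and only if $A_l$ has a $k_l$-dimensional invariant subspace meeting $W$ nontrivially, for one of the $\prod_{j<l}\binom{n}{k_j}$ subspaces $W$. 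The distinct choices of $(V_1,\dots,V_{l-1})$ give distinct $W$ generically, because they involve different eigenvectors.

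\textbf{The irreducible pieces.} For a fixed $(n-k_l)$-dimensional $W$, let $S_W$ be the set of $A_l$ having a $k_l$-dimensional invariant subspace meeting $W$ nontrivially. We prove $S_W$ is irreducible exactly as in Proposition \ref{l-var}. It is the projection of the incidence variety of pairs $(A_l,V)$ with $V$ an invariant subspace of $A_l$ and $V\cap W\neq 0$ (a closed condition by \cite[Lemma 3.8.]{tri}). This incidence variety is a projective-space bundle over the irreducible Schubert-type variety $\{V\in\Gr_{k_l}(n): V\cap W\neq 0\}$; its irreducibility follows from Lemma \ref{l-sub} with two subspaces.

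\textbf{Computing $\deg S_W$.} We use the two-matrix result of \cite{tri}. There is an irreducible polynomial $T(A,B)$ vanishing exactly when some $k_l$-dimensional invariant subspace of $A$ meets some $(n-k_l)$-dimensional invariant subspace of $B$ nontrivially. Its degree in $A$ is
$$\binom{n}{2}\binom{n-2}{k_l-1}\binom{n}{k_l}.$$
This should equal the $l=2$ case of the formula (with $\binom{n}{n-k_l}=\binom{n}{k_l}$), and we take it from \cite{tri}. We fix $B$ with distinct eigenvalues and $T(-,B)$ square-free, by Lemma \ref{squarefree}. Then $T(-,B)=\prod_W Q_W$ over the $\binom{n}{k_l}$ invariant $(n-k_l)$-subspaces $W$ of $B$, where $Q_W$ is the irreducible polynomial cutting out $S_W$.

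Since $GL_n(\C)$ acts transitively on $(n-k_l)$-subspaces and the $Q_W$ are translates of each other, all $Q_W$ have the same degree. Hence
$$\deg Q_W=\binom{n}{2}\binom{n-2}{k_l-1}.$$
This degree is then valid for every $W$, in particular for the $W$'s coming from $A_1,\dots,A_{l-1}$.

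\textbf{Conclusion.} The minimal defining polynomial of $\bar X$ in $A_l$ is therefore $\prod_W Q_W$ over the $\prod_{j<l}\binom{n}{k_j}$ distinct spaces $W$. It has the claimed degree, and it is homogeneous because $\bar X$ is a cone in each factor.

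\textbf{Main obstacle.} The delicate step is recovering the exact degree in $A$ of $T$ from \cite{tri} for general $k_l$, and confirming that it matches $\binom{n}{2}\binom{n-2}{k_l-1}\binom{n}{k_l}$. If it is not stated in that form, we would compute it from the analogous eigenvalue factorization of the two-matrix $P$. Concretely, we would count factors $\sum_{j\in F}\lambda_j-\sum_{j\in G}\mu_j$ containing $A$-eigenvalues, after removing the $D$-type factors.

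A secondary check is that distinct tuples $(V_1,\dots,V_{l-1})$ give distinct $W$, so that no $Q_W$ is repeated. This follows from genericity of the eigenvectors of $A_1,\dots,A_{l-1}$.
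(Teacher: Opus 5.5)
Your proposal is correct and follows the paper's proof essentially step for step. Both fix all but one matrix generically, split $\bar X$ into the $\prod_{j\neq i}\binom{n}{k_j}$ irreducible hypersurfaces $Q_W$, and obtain $\deg Q_W=\binom{n}{2}\binom{n-2}{k-1}$ by factoring $T_k(-,B)$ from \cite{tri} for a generic square-free specialization. The one spot to tighten is the irreducibility of $\{V : V\cap W\neq 0\}$, which is not a formal consequence of the statement of Lemma \ref{l-sub}; it follows from that lemma's proof method, since the connected stabilizer of $W$ has a dense orbit there, and this is essentially how the paper argues it.
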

\begin{proof}
Fix $A_2,\dots, A_l$ generically: none of them have multiple eigenvalues, and for any $V_2, \dots , V_l$ with $V_i$ being a $k_i$ dimensional invariant subspace of $A_i$, $\dim\langle V_2, \dots,V_l\rangle=\sum\limits_{i=2}^lk_i$. $(A_1,\dots, A_l)\in\bar X$ if and only if $A_1$ has a $k_1$ dimensional invariant subspace $V_1$ that nontrivially intersects the subspace generated by some invariant subspaces $V_2,\dots, V_l$. If we fix $V_2,\dots V_l$, the condition on $V_1$ is to nontrivially intersect with some $k_1$ codimensional subspace. Similarly to \ref{l-var}, one can prove that the set of matrices which have a $k$ dimensional invariant subspace that nontrivially intersects a given $k$ codimensional subspace $V$ is defined by an irreducible polynomial; let's call it $Q_V$. (Proof: The set $$\{W\in \Gr_{k}(n)|\langle W,V\rangle\neq \C^n\}$$ is a variety because it is the intersection of the set from \ref{l-sub} (for $k_1=k, k_2=n-k$) and $V_2=V$, and it is irreducible because the connected algebraic group $$\{C\in GL_n(\C)|CV=V\}$$ acts transitively on it. The set $$\{(A,W)\in M_n(\C)\times \Gr_k(n)|AW\subseteq W, \langle W,V\rangle\neq\C^n\}$$ is a variety as it is the intersection of the previous set and the set from \cite[Lemma 3.8.]{tri}, and it has projective spaces as fibers over the previous set, hence it is irreducible. The set in question is the projection of this set to the first variable.)

Observe that deg $Q_V$ doesn't depend on $V$.

Now we define $T_k(-,-)$ as in \cite{tri}, which is irreducible, a homogeneous polynomial of degree $\binom{n}{2}\binom{n}{k}\binom{n-2}{k-1}$ as a polynomial in the entries of the first variable matrix, and is $0$ if and only if the first matrix has a $k$ dimensional and the second matrix has a $k$ codimensional invariant subspace that nontrivially intersect each other. If we fix a $B$ with distinct eigenvalues, $T_k(-,B)$ has degree $\binom{n}{2}\binom{n}{k}\binom{n-2}{k-1}$, and it is clearly the product of the polynomials $Q_V$, where $V$ runs through all $k$ codimensional invariant subspaces of $B$, and by \ref{squarefree}, we may assume that all factors appear only once. Hence $\deg Q_V=\binom{n}{2}\binom{n-2}{k-1}$. The $A_1$-degree of the polynomial we're looking for is the degree of $Q_V$ times the number of choices we have for the $V_i$: $\binom{n}{2}\binom{n-2}{k_1-1}\prod\limits_{i=2}^l\binom{n}{k_i}$
\end{proof}
\begin{Th}
$\bar X$ is the vanishing set of the irreducible polynomial $\hat P=P/\prod\limits_{\sum k'>1} D_{k'}^{\prod\binom{n-2k_i'}{k_i-k_i'}}$
\end{Th}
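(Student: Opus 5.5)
We mirror the proof of the theorem in the $n$-matrix case. First, $P$ vanishes on $\bar X$: for $(A_1,\dots,A_l)\in\bar X$ the first half of the proposition on $P$ gives $P=0$. By Proposition \ref{l-var}, $\bar X$ is an irreducible projective hypersurface; it is indeed a hypersurface because its minimal defining polynomial has the finite positive degree computed in the previous proposition. So $\bar X$ is the zero set of a single irreducible polynomial, call it $R$, of multidegree $\binom{n}{2}\binom{n-2}{k_i-1}\prod_{j\neq i}\binom{n}{k_j}$ in $A_i$. Since $P$ vanishes on $\bar X$, the Nullstellensatz gives $R\mid P$.

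Next we show $R$ is coprime to every $D_{k'}$ with $\sum k'_i>1$. Both are irreducible, so it suffices to find a point of $\bar X$ where $D_{k'}\neq 0$. Take the $A_i$ diagonal with generic eigenvalues, so that no nontrivial relation $\sum_i(\sum_{F(i)}\lambda_{i,j}-\sum_{G(i)}\lambda_{i,j})=0$ holds and all $D_{k'}\neq0$. All $A_i$ share the coordinate flags, so we can choose coordinate invariant subspaces $V_i$ with $\sum\dim V_i=n$ but overlapping, for instance $V_1$ and $V_2$ sharing $e_1$. Then $\langle V_1,\dots,V_l\rangle\neq\C^n$, so this tuple lies in $\bar X$ while every $D_{k'}$ is nonzero. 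Hence $R\nmid D_{k'}$.

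By the proposition on divisibility by $D_{k'}$ and pairwise distinctness of the $D_{k'}$ (the Remark), $\prod_{\sum k'>1}D_{k'}^{\prod\binom{n-2k'_i}{k_i-k'_i}}$ divides $P$. By coprimality, $R$ still divides $\hat P$. The remaining step, which I expect to be the main obstacle, is to show that $\hat P$ and $R$ have the same multidegree; then $\hat P=cR$ with $c$ a nonzero constant. Note that $\hat P\neq 0$, because $P\neq0$ at generic diagonal tuples, where $\bar w$ pairs nonzero with every eigenvector of $A$.

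To compute the $A_i$-degree of $P$, use that $M$ has $N=\prod\binom{n}{k_j}$ rows, where row $r$ is homogeneous of degree $r$ in the tuple. Rescaling $A_i$ alone does not act homogeneously, so we do not argue by scaling; instead we count degrees in eigenvalues. For diagonal $A_i$, $P=\prod_F(\bar w v_F)\cdot \mathrm{Vandermonde}(\mu_F)$, where the $\mu_F$ are the eigenvalues of $A$. Hence $P$ is, up to a constant, $\prod_{F<G}(\mu_G-\mu_F)$. Its $A_i$-degree therefore equals the number of pairs $F\neq G$ with $F(i)\neq G(i)$, namely $\tfrac12 N\bigl(N-\prod_{j\neq i}\binom{n}{k_j}\bigr)$.

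Similarly, the $A_i$-degree of $d_{k'}$ (with $k'_i>0$) is its number of linear factors, $\prod_j\binom{n}{k'_j}\binom{n-k'_j}{k'_j}$, and $D_{k'}$ has half of that. Subtracting the weighted sum over $k'$ with $\sum k'>1$ from $\deg_{A_i}P$ leaves exactly the contribution of the $k'$ with $\sum k'=1$, which are not divided out. One must then check this equals $\binom{n}{2}\binom{n-2}{k_i-1}\prod_{j\neq i}\binom{n}{k_j}$.

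I would verify this by a direct combinatorial identity. Write $\deg_{A_i}P$ as the sum over all $k'$ of (multiplicity) $\times$ (degree of the bracket). The $k'=e_i$ term, with $d_{e_i}$ being the discriminant of $A_i$ and having degree $n(n-1)$, with multiplicity $\binom{n-2}{k_i-1}\prod_{j\neq i}\binom{n}{k_j}$ counted once, gives exactly the claimed degree. The terms $k'=e_j$ with $j\neq i$ do not involve $A_i$. This closes the argument: $\hat P$ has the same multidegree as $R$, so $\hat P$ is a constant multiple of $R$, hence irreducible with vanishing set $\bar X$.
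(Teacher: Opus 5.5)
Your architecture is the paper's: $R\mid P$; $R$ is not associate to any $D_{k'}$, witnessed by a generic diagonal tuple lying in $\bar X$; hence $R\mid\hat P$; then compare degrees. The degree step, which you flag as the crux, rests on a false claim.

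For diagonal $A_i$ the eigenvectors of $A$ are $v_F=\bigotimes_i\bigwedge_{j\in F(i)}e_j$. Here $\bar w v_F$ is the determinant of the columns $e_j$ with $j$ running through $F(1),\dots,F(l)$, so it is $0$ as soon as two of the sets $F(i)$ overlap, which happens for some $F$ once $l\ge 2$. Hence $\prod_F\bar w v_F=0$, and $P$ vanishes identically on diagonal tuples. Your own second paragraph already forces this: generic diagonal tuples lie in $\bar X$, where $P=0$. Three things follow:
\begin{itemize}
\item your justification of $\hat P\neq 0$ fails;
\item the identity $P=\prod_F(\bar w v_F)\cdot\mathrm{Vandermonde}(\mu_F)$ on diagonal tuples carries no degree information;
\item the assertion that $P$ is, up to a constant, $\prod_{F<G}(\mu_G-\mu_F)$ is wrong in general. $P$ is invariant only under simultaneous conjugation and depends on the eigenvectors, which is the whole point of the construction.
\end{itemize}
Note also that restricting to a special family can only bound $\deg_{A_i}P$ from below. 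Upgrading $R\mid\hat P$ to $\hat P=cR$ needs an upper bound.

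Your value $\frac12N\bigl(N-\prod_{j\neq i}\binom n{k_j}\bigr)$ is nevertheless the correct $A_i$-degree, and the error is in dismissing scaling. Although $P$ is not homogeneous in $A_i$, $\deg_{A_i}P$ equals the $t$-degree of $P(A_1,\dots,tA_i,\dots,A_l)$ at a generic tuple. Take such a tuple outside $\bar X$ with $A$ of simple spectrum. There $P\neq 0$ by the converse half of the earlier proposition, and this is the correct reason why $\hat P\neq 0$. Replacing $A_i$ by $tA_i$ keeps the eigenbasis $v_F$ of $A$ and turns $\mu_F$ into $t\sum_{j\in F(i)}\lambda_{i,j}+\sum_{p\neq i}\sum_{j\in F(p)}\lambda_{p,j}$. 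So $P(\dots,tA_i,\dots)=c\prod_{F<G}(\mu_G(t)-\mu_F(t))$ with $c\neq 0$ independent of $t$, and exactly the pairs with $F(i)\neq G(i)$ contribute a factor of $t$-degree one. With this repair, your bookkeeping with the $d_{k'}$ goes through.

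The paper avoids the issue by comparing \emph{total} degrees. $P$ is homogeneous of degree $\binom N2$ because row $r$ of $M$ is homogeneous of degree $r$, which you observed yourself. The total degree of $\prod_{\sum k'>1}D_{k'}^{\prod\binom{n-2k'_i}{k_i-k'_i}}$ is read off from the square-root identity for $\prod_{F\neq G}$. The difference is $\sum_i\binom n2\binom{n-2}{k_i-1}\prod_{j\neq i}\binom n{k_j}=\deg R$.
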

\begin{proof}
We already proved that the minimal degree polynomial whose vanishing set is $\bar X$ is an irreducible divisor of $P$ with degree\\ $\sum\limits_{i=1}^l 
\binom{n}{2}\binom{n-2}{k_i-1}\prod\limits_{j\neq i}\binom{n}{k_j}$. It clearly cannot be $D_{k'}$ for any $k'$, so it is a divisor of $\hat P$ too. But the equality
\begin{align*}\prod\limits_{\sum k'>1} D_{k'}^{\prod\binom{n-2k_i'}{k_i-k_i'}}=\\\sqrt{\left(\prod\limits_{F\neq G}\sum\limits_{i\in [l]}\left(\sum\limits_{j \in F(i)}\lambda_{i,j}-\sum\limits_{j\in G(i)}\lambda_{i,j}\right)\right)/\prod\limits_{\sum k'_i=1}D_{k'}^{\prod\binom{n-2k'_i}{k_i-k'_i}}}\end{align*} shows that the degree of $\hat P$ is (with the notation $N=\prod\binom{n}{k_i}$)\begin{align*}\binom{N}{2}-\frac{1}{2}\left(N(N-1)-\sum\limits_{i=1}^ln(n-1)\binom{n-2}{k_i-1}\prod\limits_{j\neq i}\binom{n}{k_j}\right)=\\ \sum\limits_{i=1}^l \binom{n}{2}\binom{n-2}{k_i-1}\prod\limits_{j\neq i}\binom{n}{k_j}\end{align*} which proves the statement.
\end{proof}

\end{document}